\newtheorem{theorem}{Theorem}
\newtheorem{corollary}[theorem]{Corollary}
\newtheorem{lemma}[theorem]{Lemma}
\newtheorem{proposition}[theorem]{Proposition}
\newtheorem{remark}[theorem]{Remark}
\newtheorem{definition}[theorem]{Definition}
\newcommand{\eps}{\varepsilon}
\newcommand{\la}{\lambda}
\newcommand{\laa}{\lambda_1^{A}}
\newcommand{\smax}{\sigma_{\text{max}}}
\newcommand\Prob{{\mathcal P}}
\newcommand\cOmega{\overline{\Omega}}
\newcommand\sfera{{{\mathbb S}^1}}
\newcommand\cil{\cOmega\times\sfera}
\newcommand\Csym{C_{\text{sym}}(\overline{\Omega}\times {\mathbb S}^1)}
\newcommand{\R}{\mathbb{R}} 
\newcommand{\Si}{\Sigma}
\newcommand{\Om}{\Omega}
\newcommand{\Oms}{\Omega\setminus\Sigma}
\newcommand{\Ots}{\overline{\Omega}\times\mathbb{S}^1}
\newcommand{\Alo}{\mathcal{A}_L(\Omega)}
\newcommand\chb{{\mathcal Q}_s}
\newcommand{\prob}{\mathcal{V}_1(\overline{\Omega})}
\newcommand{\weak}{\rightharpoonup}
\newcommand{\haus}{\mathcal{H}^1}
\newcommand{\rectangle}{

\draw[-, line width=0.6pt] (0,0)--(0,10); %RETTANGOLI
\draw[-, line width=0.6pt] (0,10)--(10,10);
\draw[-, line width=0.6pt] (10,10)--(10,0);
\draw[-, line width=0.6pt] (10,0)--(0,0);
\draw[-,very thin] (0,2)--(10,2);
\draw[-,very thin] (0,7)--(10,7);

\draw[-,very thin] (0,9.3)--(0.5,10);
\draw[-,very thin] (1,7)--(3,10);
\draw[-,very thin] (3.5,7)--(5.5,10);
\draw[-,very thin] (6,7)--(8,10);
\draw[-,very thin] (8.5,7)--(10,9.2);

\draw[-,very thin] (0,3)--(2.375,2);
\draw[-,very thin] (0,4.5)--(5.94,2);
\draw[-,very thin] (0,6)--(9.5,2);
\draw[-,very thin] (1.187,7)--(10,3.29);
\draw[-,very thin] (4.75,7)--(10,4.79);
\draw[-,very thin] (8.31,7)--(10,6.29);

%\draw[-,very thin] (0,0.5)--(0.5,0);
%\draw[-,very thin] (0,1)--(1,0);
\draw[-,very thin] (0,1.5)--(1.5,0);
%\draw[-,very thin] (0,2)--(2,0);
%\draw[-,very thin] (0.5,2)--(2.5,0);
%\draw[-,very thin] (1,2)--(3,0);
\draw[-,very thin] (1.5,2)--(3.5,0);
%\draw[-,very thin] (2,2)--(4,0);
%\draw[-,very thin] (2.5,2)--(4.5,0);
%\draw[-,very thin] (3,2)--(5,0);
\draw[-,very thin] (3.5,2)--(5.5,0);
%\draw[-,very thin] (4,2)--(6,0);
%\draw[-,very thin] (4.5,2)--(6.5,0);
%\draw[-,very thin] (5,2)--(7,0);
\draw[-,very thin] (5.5,2)--(7.5,0);
%\draw[-,very thin] (6,2)--(8,0);
%\draw[-,very thin] (6.5,2)--(8.5,0);
%\draw[-,very thin] (7,2)--(9,0);
\draw[-,very thin] (7.5,2)--(9.5,0);
%\draw[-,very thin] (8,2)--(10,0);
%\draw[-,very thin] (8.5,2)--(10,0.5);
%\draw[-,very thin] (9,2)--(10,1);
\draw[-,very thin] (9.5,2)--(10,1.5);
}
\DeclareMathOperator*{\esssup}{ess\,sup}
\DeclareMathOperator*{\essinf}{ess\,inf}
\newdimen\mex
\def\niv{\mathrel{\hbox{\hglue -0.4\mex
\vrule \@height 1.4\mex \@width .14\mex
\vrule \@height .14\mex \@width .75\mex
\hglue -0.2\mex}}}
\title[Where best to place a Dirichlet condition]{Where best to place a Dirichlet condition
in
an anisotropic membrane?}
\author{Paolo Tilli, Davide Zucco}
\address{Paolo Tilli, Dipartimento di Scienze Matematiche,  Politecnico di Torino, Italy}
\email{paolo.tilli@polito.it}
\address{Davide Zucco, Scuola Internazionale Superiore di Studi Avanzati, Trieste, Italy}
\email{davide.zucco@sissa.it}
\begin{document}

\begin{abstract}
We study a shape optimization problem for the first eigenvalue of
an elliptic operator in divergence form, with non constant coefficients,
over a fixed domain $\Om$.
Dirichlet conditions are imposed along $\partial \Om$ and, in addition,
along a   set $\Si$ of prescribed length ($1$-dimensional Hausdorff measure).
We look for the best shape and position for the
supplementary Dirichlet region $\Si$ in order to maximize the first eigenvalue.
The limit distribution of the optimal sets, as their
prescribed  length tends to infinity, is characterized  via $\Gamma$-convergence
of suitable functionals defined over varifolds: the use of varifolds, as opposed
to probability measures, allows one to keep track of the local
orientation of the optimal sets (which comply with the anisotropy of the problem),
and not just of their limit distribution.
\end{abstract}

\maketitle

\section{Introduction}
We investigate an optimization problem for the first Dirichlet
eigenvalue of an elliptic operator in divergence form, where the
unknown is a compact set $\Sigma$ of prescribed length $L$, along
which a homogeneous Dirichlet condition is imposed. Our interest
is focused on the behavior of the optimal sets, when the
prescribed length $L\to\infty$.

The case of the Laplacian was studied in \cite{tilzuc}.
Here, on the contrary, the
elliptic operator is anisotropic, and this requires
the introduction of new ideas and techniques: among them,
the main novelty is the use of \emph{varifolds}
(instead of probability measures), as a tool to properly handle the anisotropy of the problem.

Throughout we shall assume, without further reference, that we are given:
\begin{itemize}
\item [(i)] a bounded domain
$\Omega\subset\R^2$ with a Lipschitz boundary $\partial\Omega$ (we do not assume $\Om$ to be simply connected);
\item [(ii)] a $2\times 2$ symmetric, positive definite, matrix-valued
function $A(x)=[a_{ij}(x)]$, defined for $x\in\overline{\Omega}$
and continuous there.
\end{itemize}
Observe that $A(x)$ is \emph{uniformly elliptic}, i.e. for some constant $C>0$
\begin{equation}
\label{unifell}
C^{-1} |y|^2\leq \langle A(x) y,y\rangle \leq C|y|^2\quad
\forall y\in\R^2,\quad\forall x\in\overline{\Omega}.
\end{equation}
For every $L>0$, the class of admissible sets
(the ``supplementary Dirichlet regions'') is defined as follows:
\begin{equation}\label{class}
\Alo:=\left\{\Sigma\subset\overline{\Omega}\,\,:\,\,\text{$\Sigma$ is a continuum and $\haus(\Sigma)\leq L$}\right\}
\end{equation}
(throughout, ``continuum'' stands for ``non-empty, connected, compact set'' while
$\haus$ denotes the one-dimensional Hausdorff measure).
We are interested in the maximization problem
\begin{equation}\label{prob}
 \max \left\{\laa(\Om\setminus\Si): \; \Sigma\in \Alo\right\},
\end{equation}
where the first eigenvalue $\laa(\Oms)$ is given by
\begin{equation}\label{eigen}
\laa(\Oms)= \min_{\genfrac{}{}{0pt}{}{u\in H^{1}_0(\Oms)}  {u\not\equiv 0}} \frac{\int_{\Om} \langle A(x)\nabla u(x),\nabla u(x)\rangle dx}{\int_{\Om}u(x)^2 dx}
\end{equation}
(since any admissible $\Sigma$ has zero Lebesgue measure, integration
over $\Omega$ is equivalent to integration over $\Omega\setminus\Sigma$).
For fixed $\Sigma\in\Alo$,
the coefficient matrix
$A(x)$ gives rise, in the open set $\Omega\setminus \Sigma$,
to the elliptic operator
%\begin{equation}\label{operator}
%-\sum_{i,j=1}^2\frac{\partial}{\partial x_i}\big(a_{ij}(x)u_{x_j}(x)\big)
%\end{equation}
$-\mathop{\rm div}A(x)\nabla u$:
the number $\lambda_1^A(\Omega\setminus\Sigma)$ is then the first eigenvalue relative
to this operator, with a homogeneous Dirichlet condition along $\Sigma\cup\partial\Omega$.
Every set $\Sigma\in\Alo$ is rectifiable and (except for the uninteresting case where
it is a singleton) it
has a \emph{positive capacity}: therefore, the condition $u\in H^1_0(\Omega\setminus\Sigma)$
is in fact equivalent to $u\in H^1_0(\Omega)$ with the \emph{additional} condition that
$u=0$ along $\Sigma$, a.e. with respect to the Hausdorff measure $\haus$.
Thus, if $\Omega$ is a membrane fixed along its boundary $\partial\Omega$,
$\Sigma$ can be interpreted as a stiffening rib,
that will
increase the fundamental frequency of the membrane.
Observe that, by \emph{monotonicity} with respect to the domain,
%$\lambda_1^A(\Omega\setminus\Sigma)\leq \lambda_1^A(\Omega\setminus\Sigma')$ if $\Sigma\subseteq \Sigma'$,
%so that
the larger $\Sigma$, the higher the first eigenvalue.

In \eqref{class},
the \emph{length constraint} $\haus(\Sigma)\leq L$, combined with the connectedness
assumption,
prevent $\Sigma$ to become too large and spread out over $\Omega$:
these constraints can also be seen as a bound on the total resources available, in order to raise the membrane
frequency (for related problems in the same setting, see e.g. \cite{butsan,mostil}).
Without any length constraint, the best choice would be $\Sigma=\overline{\Omega}$, and
the problem would be trivial: in \eqref{prob}, however, the optimal sets $\Sigma_L$ still tend
to \emph{saturate} $\Omega$ as much as possible, compatibly with the constraint that $\Sigma_L\in\Alo$.
In this paper we shall investigate \emph{how} this saturation occurs
as $L\to\infty$, answering questions such as:
with what limit density (length per unit area) will $\Sigma_L$ saturate $\Omega$? With what local orientation?

Due to Go{\l}ab and Blaschke theorems
(see \cite{ambtil}),
$\Alo$ is compact
with respect to the Hausdorff convergence, while the first
eigenvalue $\laa$ is continuous (see \cite{sve,hen}). Therefore,
the existence of an optimal set $\Sigma_L$
for problem \eqref{prob} is immediate:

\begin{theorem}
For every $L>0$ there exists a maximizer in \eqref{prob}. Moreover,
every maximizer $\Sigma_L$ satisfies $\haus(\Sigma_L)=L$.
\end{theorem}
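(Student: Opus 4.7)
My plan is the direct method of the calculus of variations. Given a maximizing sequence $\{\Sigma_n\}\subset\Alo$, Blaschke's selection theorem extracts a Hausdorff-convergent subsequence with limit $\Sigma\subset\overline\Om$ nonempty and compact. Go\l{}ab's theorem preserves connectedness under Hausdorff limits and provides the lower semicontinuity $\haus(\Sigma)\le\liminf_n\haus(\Sigma_n)\le L$, so $\Sigma\in\Alo$. Combining this with the continuity of $\laa$ with respect to Hausdorff convergence of the excluded set (cited in the excerpt from \cite{sve,hen}), one obtains $\laa(\Om\setminus\Sigma_n)\to\laa(\Om\setminus\Sigma)$, so $\Sigma$ is a maximizer.

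\textbf{Strict monotonicity on a connected set.} The cornerstone of the saturation argument is the following local fact: if $C\subset\R^2$ is a bounded connected open set and $K\subset C$ is compact with positive capacity, then $\laa(C\setminus K)>\laa(C)$. On the connected set $C$, $\laa(C)$ is simple and its eigenfunction $\phi$ is strictly positive in $C$ by the strong maximum principle (valid thanks to the uniform ellipticity \eqref{unifell} and the continuity of $A$); in particular $\phi$ is positive capacitary-a.e.\ on $K$, so $\phi\notin H^1_0(C\setminus K)$, and by simplicity no nonzero multiple of $\phi$ lies in the smaller space, which forces the Rayleigh minimum to be strictly larger.

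\textbf{Saturation.} Assume by contradiction that a maximizer $\Sigma_L$ has $\haus(\Sigma_L)<L$. Decompose $\Om\setminus\Sigma_L=\bigsqcup_j C_j$ and let $C_1^*,\dots,C_k^*$ be the components realizing $\lambda:=\laa(\Om\setminus\Sigma_L)$. The number $k$ is finite, either by the compactness of the resolvent of the elliptic operator on $\Om\setminus\Sigma_L$ (so that the first eigenspace is finite-dimensional) or equivalently by a Faber-Krahn type area bound $\mathrm{area}(C_i^*)\gtrsim 1/\lambda$ summing to at most $\mathrm{area}(\Om)$. Since $\Om$ is connected, for each $i$ the closed set $\Sigma_L$ meets $\partial C_i^*$ (the only exception, $\Sigma_L\subset\partial\Om$, gives $k=1$ and is handled by a single arc pushed into $\Om$). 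I would then graft to $\Sigma_L$ a short arc $\gamma_i\subset\overline{C_i^*}$ attached at a point of $\Sigma_L\cap\partial C_i^*$, choosing $\sum_i\haus(\gamma_i)<L-\haus(\Sigma_L)$. The set $\Sigma':=\Sigma_L\cup\bigcup_i\gamma_i$ is then a continuum in $\Alo$; each modified component $C_i^*\setminus\gamma_i$ has first eigenvalue strictly larger than $\lambda$ by the local monotonicity above, while every other $C_j$ is untouched and already has first eigenvalue $>\lambda$. Taking minima yields $\laa(\Om\setminus\Sigma')>\lambda$, contradicting the optimality of $\Sigma_L$.

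\textbf{Main obstacle.} The most delicate point is the possible multiplicity of the first eigenvalue on $\Om\setminus\Sigma_L$: inserting a short arc in a single minimizing component cannot raise $\laa$ if other components realize $\lambda$ as well. The remedy is to graft an arc into every minimizing component \emph{simultaneously} while preserving connectedness of $\Sigma_L$, and this is exactly where the finiteness of $k$ (via compactness of the resolvent) becomes indispensable. A secondary technical issue is ensuring that each $\gamma_i$ stays in $\overline{C_i^*}$; this is achieved by selecting $\gamma_i$ as a distance-minimizing segment between $\Sigma_L$ and a small ball in $C_i^*$.
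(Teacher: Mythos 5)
Your argument follows the same route as the paper, which only sketches this theorem: existence via Blaschke--Go\l{}ab compactness of $\Alo$ together with the \v{S}ver\'ak-type continuity of $\laa$ under Hausdorff convergence, and saturation via strict monotonicity, grafting short arcs into every component attaining the first eigenvalue; your handling of the multiplicity of the bottom eigenvalue and of the finiteness of the minimizing components is correct and in fact more detailed than the paper's one-line justification. The only inaccurate detail is your final remedy: a distance-minimizing segment between $\Sigma_L$ and an arbitrary small ball in $C_i^*$ need not stay in $\overline{C_i^*}$, nor even in $\overline{\Omega}$ when $\Omega$ or $C_i^*$ fails to be convex; instead observe that, unless $\Sigma_L\subseteq\partial\Omega$, each minimizing component admits an attachment point $x_i\in\Sigma_L\cap\partial C_i^*\cap\Omega$, and for $y\in C_i^*$ close enough to $x_i$ (so that $\mathrm{dist}(y,\Sigma_L)<\mathrm{dist}(y,\partial\Omega)$) the segment joining $y$ to its nearest point of $\Sigma_L$ has its open part contained in $B_{\mathrm{dist}(y,\Sigma_L)}(y)\subset\Omega\setminus\Sigma_L$, hence in $C_i^*$, which yields the required short arc of positive capacity (the exceptional case $\Sigma_L\subseteq\partial\Omega$ being settled by the interior cone property of the Lipschitz boundary, as you indicate).
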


The last part of the claim is typical of constrained problems that obey monotonicity with
respect to domain inclusion: if $\haus(\Si)< L$, then $\laa(\Oms)$ could be increased,
e.g. by attaching
to $\Si$  some short segment (see  \cite{buoust, butsan,tilzuc}). Observe, en passant, that the corresponding
\emph{minimization} problem is trivial: the optimal sets are all singletons $\Sigma=\{x_0\}$ and
all sets $\Sigma\subseteq\partial\Omega$ (in both cases, one has $\lambda_1^A(\Omega\setminus\Sigma)=
\lambda_1^A(\Omega)$ which is clearly optimal).

\medskip

Our goal is to study the limit behavior of the optimal sets $\Sigma_L$ as $L\to\infty$, via
$\Gamma$-convergence. If $A(x)=\sigma(x) I$ is a multiple of the identity matrix, i.e. if the
membrane $\Omega$ is \emph{isotropic} (though not necessarily uniform),
this has been investigated in a previous paper \cite{tilzuc}. Passing from $A(x)=\sigma(x) I$
to an anisotropic matrix $A(x)$, however, is not a merely technical extension: indeed,
several new ideas are required, and even the final expression for the $\Gamma$-limit functional
is not easy to figure out.

In \cite{tilzuc},
along lines typical of this kind of problems (see \cite{bouchitte,bouchitte2,butsan,butsanstep,mostil}),
the main idea
was to associate, with every admissible set $\Sigma_L\in\Alo$,
the \emph{probability measure} $\mu_\Sigma$ defined as
\begin{equation}\label{mul}
 \mu_\Si:=\frac{\haus\niv\Si}{\haus(\Sigma)},
 \end{equation}
where $\haus\niv\Si$ is the one-dimensional Hausdorff measure restricted to $\Sigma$.
If $\Sigma_L$ is a solution to \eqref{prob} and $L\to\infty$, knowledge of the weak-* limits $\mu$ of the corresponding
measures $\mu_{\Sigma_L}$ in the space of probability measures $\Prob(\overline{\Omega})$, provides
information on \emph{how} the optimal sets $\Sigma_L$ tend to saturate $\Omega$ in the limit.
In concrete terms, one can define the functionals $G_L:\mathcal P(\overline{\Omega})\to [0,+\infty]$
\begin{equation}\label{fl1}
G_L(\mu)=
\begin{cases}
\dfrac{L^2}{\laa(\Oms)} &\text{if  $\mu=\mu_\Si$ for some $\Si\in\Alo$,}\\[2mm]
+\infty  &\text{otherwise,}
\end{cases}
\end{equation}
and investigate their $\Gamma$-limit in $\Prob(\cOmega)$ as $L\to\infty$.
Indeed, for fixed $L$,
the \emph{maximization} problem \eqref{prob} is equivalent to the
\emph{minimization} of $G_L$, while the factor $L^2$ is a renormalization
that allows for a nontrivial $\Gamma$-limit. If the coefficient matrix $A(x)=\sigma(x)I$
is isotropic and $\sigma(x)$ is continuous, it was proved in \cite{tilzuc} that the functionals $G_L$ $\Gamma$-converge
(with respect to the weak-* topology on $\mathcal P(\overline{\Omega})$) to the
functional
\begin{equation*}
%\label{vecchioGL}
G_\infty(\mu):=
\frac{1}{\pi^2}\esssup_{x\in\Omega}\frac{1}{f_\mu(x)^2\sigma(x)},
\end{equation*}
where $f_\mu(x)$ is the density of $\mu$  w.r.to the Lebesgue measure (note
the interaction between $f_\mu(x)$ and $\sigma(x)$,
the Young modulus of the membrane). This functional is minimized by a \emph{unique} probability
measure, an $L^1$ function proportional to $\sigma(x)^{-1/2}$: as a consequence (\cite{tilzuc}),
one infers that for large $L$ the optimal sets $\Sigma_L$ have, at small scales, a
comb-shaped structure, essentially made up of parallel lines whose spacing is locally proportional to
$\sigma(x)^{1/2}$. The local orientation of the comb teeth is,
on the other hand, \emph{irrelevant},
and rotating an optimal pattern preserves optimality: this clearly reflects the isotropy of
the coefficient matrix $A(x)=\sigma(x)I$.

\medskip

When the matrix $A(x)$ is \emph{anisotropic}, the previous analysis
is no longer valid. One still expects optimal sets to look like comb-shaped patterns
at small scales, but their local orientation will now depend on $A(x)$: heuristically,
since in the smooth case the gradient $\nabla u(x)$ of the first eigenfunction $u\in H^1_0(\Omega\setminus\Sigma)$
is roughly directed as the normal unit vector $\xi(x)$ at $x\in\Sigma$, one expects $\Sigma$ to be locally
directed in such a way that $\xi(x)$ is parallel to an eigenvector of $A(x)$, namely the one relative
to its \emph{largest} eigenvalue, so that the quadratic form $\langle A(x)\nabla u,\nabla u\rangle$
is (locally) maximized in \eqref{eigen}.
On the other hand,
the probability measures $\mu$ in \eqref{mul} and their weak-* limits keep \emph{no track}
of
the orientation of the optimal $\Sigma_L$: they just keep memory of the density (length per unit area) of
$\Sigma_L$ inside $\Omega$. In other words, to fully understand the anisotropic case
we have to associate, with every admissible $\Sigma\in\Alo$, a more complex object than a
measure $\mu\in \mathcal P(\overline{\Omega})$, possibly one that carries information also on the \emph{unit normal}
to $\Sigma$, and study $\Gamma$-convergence in that setting.

Roughly speaking, the main idea is to associate,
with every
$\Si\in\Alo$, a probability measure $\theta_\Si$ in the
\emph{product space} $\overline{\Omega}\times
 \mathbb{S}^1$,   defined via integration by
\begin{equation}
\label{defT}
\int_{\Ots} \varphi(x,y)d\theta_\Sigma(x,y):=\frac{1}{\haus(\Si)}
\int_{\Si}\varphi\big(x,\xi_{\Si}(x)\big)\,d\haus(x),\quad
\varphi\in C_{\text{sym}}(\overline{\Omega}\times
 \mathbb{S}^1),
\end{equation}
where $\xi_\Sigma(x)$ is the unit normal vector to $\Sigma$ at $x\in\Sigma$, while
\[
C_{\text{sym}}(\overline{\Omega}\times {\mathbb S}^1):=
\left\{
\varphi\in
C(\overline{\Omega}\times {\mathbb S}^1)\,\,
\vert\,\, \varphi(x,y)=\varphi(x,-y)\quad\forall x\in\overline{\Omega},
\forall y\in {\mathbb S}^1\right\}
\]
is the space of continuous functions with \emph{antipodal} symmetry.
Since every set
$\Si\in\Alo$ is 1-rectifiable, a unit normal $\pm\xi_\Sigma(x)$ is well defined, up to the orientation,
at $\haus$-a.e. $x\in\Sigma$, and $\xi_\Sigma(x)$ in \eqref{defT} denotes any \emph{measurable selection}
of this normal field: due to the antipodal symmetry of $\varphi$,
the second integral in \eqref{defT} does \emph{not} depend on the particular selection.

In fact, \eqref{defT} defines $\theta_\Sigma$ as an element of the dual space of
$C_{\text{sym}}(\overline{\Omega}\times {\mathbb S}^1)$,
that is, as a (probability) measure over
$\overline{\Omega}\times \mathbb P^1$, where $\mathbb P^1$ is the \emph{projective space}.
% On the other hand, due to this restriction on $\varphi$, \eqref{defT}
%is not enough to define $\Theta_\Sigma$ as a \emph{measure} over $\overline{\Omega}\times {\mathbb S}^1$
%(for this, one should define $\langle \Theta_\Sigma,\varphi\rangle$ for \emph{every} continuous
%$\varphi$): in fact, \eqref{defT} completely defines $\Theta_\Sigma$ as a measure over
%$\overline{\Omega}\times \mathbb P^1(\R)$, where $\mathbb P^1(\R)$ is the \emph{projective space}.
In other words, \eqref{defT} defines  $\theta_\Sigma$ as the
$1$-dimensional \emph{varifold} induced by $\Sigma$ (see \cite{alm,sim}),
with the proviso that we work with the \emph{normal} instead of the more usual
tangent space.

We denote by $\prob$ the space of $1$-dimensional varifolds with unit mass, that is,
probability measures over $\overline{\Omega}\times \mathbb P^1$, endowed with the usual
weak-* topology.
Throughout, however, we shall always consider a  varifold $\theta\in\prob$
as
an \emph{equivalence class} of probability measures on
$\overline{\Omega}\times  \mathbb{S}^1$, two measures being equivalent $\iff$ they
induce the same linear functional on  $\Csym$: thus,
by choosing a representative in its equivalence class,
we can still treat $\theta\in\prob$ as a
probability measure over $\overline{\Omega}\times {\mathbb S}^1$. With this agreement
(useful to avoid the technical language of varifolds), the weak-* convergence
of a sequence $\{\theta_L\}$ in $\prob$ to a varifold $\theta\in\prob$
takes the concrete form
\begin{equation}
\label{weakconvergenceT}
%\text{$\Theta_n\rightharpoonup \Theta$ in $\prob$}
%\quad\iff\quad
\lim_{L\to\infty}
\int_{\Ots} \varphi(x,y)
 \,d\theta_L
=
\int_{\Ots} \varphi(x,y)
\,d\theta\quad
\forall
\varphi\in\Csym.
\end{equation}

Now we are ready to state our main result.
By analogy with \eqref{fl1}, for $L>0$ we define the functional
$F_L:\prob\to [0,\infty]$ as
\begin{equation}\label{fl}
F_L(\theta)=
\begin{cases}
\dfrac{L^2}{\laa(\Oms)} &\text{if  $\theta=\theta_\Si$ for some $\Si\in\Alo$,}\\[2mm]
+\infty  &\text{otherwise,}
\end{cases}
\end{equation}
where $\theta_\Sigma$ is as in \eqref{defT}. The definition of the $\Gamma$-limit
functional $F_\infty$, which of course should depend on the coefficient matrix $A(x)$, is more involved.

\begin{definition}\label{defFi}
Given $\theta\in\prob$, by choosing a representative in its equivalence class we can
regard $\theta$ as an element of $\Prob(\cil)$,
and we can consider its \emph{first marginal} $\mu$, i.e. the probability measure
over $\overline\Omega$ defined by
\[
\mu(E):=\theta(E\times {\mathbb S}^1),\quad
\text{for every Borel set $E\subseteq\overline\Omega.$}
\]
Then, by well known results (see \cite{amfupa}), one can \emph{disintegrate}
$\theta$ as $\mu\otimes\nu_x$, where $\{\nu_x\}$ is a
$\mu$-measurable
family of probability
measures over ${\mathbb S}^1$ defined for $\mu$-a.e. $x\in\overline\Omega$. This means
that
\begin{equation*}
%\label{defslice}
\int_{\Ots} \varphi(x,y)d\theta=
\int_{\overline\Omega}
\left(
\int_\sfera \varphi(x,y)\,d\nu_x(y)\right)
\,d\mu(x)
,\quad
\forall\varphi\in C(\overline{\Omega}\times
 \mathbb{S}^1).
\end{equation*}
The measure $\mu$ depends only on $\theta$ as an element of
$\prob$ (not on the choice of its representative in $\Prob(\cil)$), while
the measures $\nu_x$ may well depend on the particular representative.
However, the integrals
\begin{equation*}
%\label{integrals}
x\mapsto \int_\sfera \sqrt{\langle A(x) y,y\rangle} \,d\nu_x(y)\quad
\text{(defined for $\mu$-a.e. $x\in\cOmega$)}
\end{equation*}
are independent of the particular representative of $\theta$, since the function
$(x,y)\mapsto \langle A(x) y,y\rangle^{1/2}$ belongs to $\Csym$. As a consequence,
if $f_\mu(x)$ denotes the density of $\mu$ (w.r.to the Lebesgue measure on $\cOmega$),
the functional
\begin{equation}\label{gammalimit}
F_\infty(\theta):=\frac{1}{\pi^2}\esssup_{x\in\Omega}\frac{1}
{\big(f_\mu(x)\int_{\mathbb{S}^1} \sqrt{\langle A(x)y,y\rangle}\, d\nu_x(y) \big)^2},\qquad
\theta\in\prob
\end{equation}
is well defined over $\prob$ (with values in $[0,+\infty]$), since it only depends on
$\theta$ as an element of $\prob$.
\end{definition}

The main result of the paper is then the following.

\begin{theorem}\label{aniso}
As $L\to\infty$, the functionals $F_L$ defined in \eqref{fl}
$\Gamma$-converge, with respect to the weak-*
topology on $\prob$, to the functional $F_\infty$ defined in \eqref{gammalimit}.
\end{theorem}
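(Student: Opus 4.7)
The plan is to prove the $\Gamma$-liminf and $\Gamma$-limsup inequalities separately, adapting the framework developed for the isotropic case in \cite{tilzuc} and exploiting the varifold structure to handle the orientation-dependent term in $F_\infty$. A guiding heuristic is the affine change of variables $z=A(x_0)^{-1/2}x$ near a ``good'' point $x_0$: this transforms $-\mathop{\rm div}(A\nabla\cdot)$ into the Laplacian, and a direct computation of the transformation of tangent vectors shows that in the new coordinates the varifold acquires a Lebesgue density equal to $f_\mu(x_0)\int_{\sfera}\sqrt{\langle A(x_0)y,y\rangle}\,d\nu_{x_0}(y)$. Formally feeding this density into the isotropic $\Gamma$-limit of \cite{tilzuc} reproduces the integrand of $F_\infty$, so the task is to make this heuristic rigorous.

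\textbf{$\Gamma$-liminf.} Take $\theta_L=\theta_{\Sigma_L}\weak\theta$ in $\prob$. It suffices to prove
\[
\limsup_{L\to\infty}\frac{\laa(\Omega\setminus\Sigma_L)}{L^2}\le\pi^2\bigl(f_\mu(x_0)g(x_0)\bigr)^2,\qquad g(x):=\int_{\sfera}\sqrt{\langle A(x)y,y\rangle}\,d\nu_x(y),
\]
at $\mu$-a.e.\ joint Lebesgue point $x_0$ of $f_\mu$ and the disintegration $\nu_x$; taking the essential infimum over such $x_0$ then yields $\liminf F_L(\theta_L)\ge F_\infty(\theta)$. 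The estimate is produced by an admissible test function in the Rayleigh quotient \eqref{eigen}, compactly supported in $B_r(x_0)$. Partitioning $\sfera$ into arcs $I_1,\dots,I_N$ around directions $y_1,\dots,y_N$, the varifold convergence gives that $\Sigma_L\cap B_r(x_0)$ contains approximately $L|B_r(x_0)|f_\mu(x_0)\nu_{x_0}(I_j)$ length with normal in $I_j$; in each such piece we approximate $\Sigma_L$ by parallel segments orthogonal to $y_j$ with spacing $d_{j,L}=1/(Lf_\mu(x_0)\nu_{x_0}(I_j))$, and in each resulting strip we place the first Dirichlet profile $\sin(\pi s/d_{j,L})$, where $s$ is the coordinate in direction $y_j$. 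Glued via a partition of unity with negligible gradient interference, this function has Rayleigh quotient bounded by $\pi^2 f_\mu(x_0)^2\bigl(\sum_j\nu_{x_0}(I_j)\sqrt{\langle A(x_0)y_j,y_j\rangle}\bigr)^2+o(1)$, and refining the partition of $\sfera$ makes the sum converge to $g(x_0)$.

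\textbf{$\Gamma$-limsup.} By a standard density argument it suffices to construct a recovery sequence for those $\theta$ whose first marginal has a positive continuous density and whose disintegration is of the form $\nu_x=\sum_{j=1}^N c_j(x)\delta_{y_j}$ with fixed $y_j\in\sfera$ and continuous weights $c_j$. Over a grid of squares $Q_i$ of side $\eps$ covering $\Omega$, in each $Q_i$ we superimpose $N$ families of parallel segments, the $j$-th family orthogonal to $y_j$ with spacing $1/(Lf_\mu(x_i)c_j(x_i))$, contributing a fraction $c_j(x_i)$ of the length in $Q_i$. Short connecting arcs of total length $o(L)$ glue these patches into a connected $\Sigma_L\in\Alo$ with $\theta_{\Sigma_L}\weak\theta$. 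Inside each $Q_i$, the complement $\Omega\setminus\Sigma_L$ consists of small convex polygons; a local Faber--Krahn-type lower bound for the operator with frozen coefficients $A(x_i)$ gives
\[
\laa(\Omega\setminus\Sigma_L)\ge\pi^2\essinf_i f_\mu(x_i)^2\Bigl(\sum_{j=1}^N c_j(x_i)\sqrt{\langle A(x_i)y_j,y_j\rangle}\Bigr)^2\bigl(1-o(1)\bigr),
\]
from which $\limsup F_L(\theta_L)\le F_\infty(\theta)$ follows by letting $L\to\infty$ and then $\eps\to 0$.

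\textbf{Main obstacle.} The hardest point is that both sides of the $\Gamma$-convergence must see the integral $\int\sqrt{\langle Ay,y\rangle}\,d\nu_x$ as a squared \emph{average}, and not as an average of squares. In the $\Gamma$-liminf this forces the test function to combine strip profiles of different orientations \emph{in length proportion}, so that gradient contributions enter additively with the correct weights $\sqrt{\langle A(x_0)y_j,y_j\rangle}$ before being squared; in the $\Gamma$-limsup it forces the comb spacings within each cell to be tuned so that the Faber--Krahn estimate on the polygonal components of $\Omega\setminus\Sigma_L$ recovers exactly the same averaged quantity. It is precisely this ``linear in orientation, quadratic in density'' structure of the $\Gamma$-limit that makes the varifold setting (rather than probability measures alone) indispensable, and orchestrating the partition-of-unity gluing so that no unwanted cross terms appear is the technical crux of the argument.
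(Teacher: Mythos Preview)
Your proposal has genuine gaps in both halves.

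\medskip

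\textbf{$\Gamma$-liminf.} To bound $\laa(\Omega\setminus\Sigma_L)$ from above you must exhibit a function in $H^1_0(\Omega\setminus\Sigma_L)$, i.e.\ one that vanishes on the \emph{actual} set $\Sigma_L$. But $\Sigma_L$ is an arbitrary continuum whose only known feature is its limit varifold; it is not a union of parallel segments, and knowing that a certain fraction of its length has normal near $y_j$ does not let you ``approximate $\Sigma_L$ by parallel segments'' and then use sine profiles on the resulting strips---those strips do not exist, and your test function would not vanish on $\Sigma_L$. The claimed Rayleigh-quotient bound is therefore unsupported. The paper avoids this difficulty entirely: it localizes to a small square $Q$ centered at $x_0$, freezes $A\approx M=A(x_0)$, and invokes a \emph{universal} upper bound for $\lambda_1^M(Q;D)$ valid for any connected $D$ (its Theorem~\ref{upper}, obtained from the corresponding Laplacian estimate in \cite{tilzuc} via the affine change of variables $z=M^{-1/2}x$). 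That bound depends on $D$ only through the Riemannian length $\haus_M(D)=\int_D\sqrt{\langle M\xi,\xi\rangle}\,d\haus$, which is exactly the quantity the varifold controls in the limit; Lebesgue differentiation then finishes the argument.

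\medskip

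\textbf{$\Gamma$-limsup.} Superimposing $N$ families of parallel segments with spacings $d_j=1/(Lf_\mu c_j)$ does \emph{not} produce the eigenvalue lower bound you claim. Each complement polygon lies in a strip of width $d_j$ orthogonal to $y_j$, so Theorem~\ref{thin} only gives $\lambda_1^M\ge \pi^2\max_j\langle My_j,y_j\rangle/d_j^2=\pi^2L^2f_\mu^2\max_j c_j^2\langle My_j,y_j\rangle$, which is strictly smaller than the required $\pi^2L^2f_\mu^2\bigl(\sum_jc_j\sqrt{\langle My_j,y_j\rangle}\bigr)^2$ whenever more than one $c_j$ is nonzero. (Already for $M=I$, $N=2$, $c_1=c_2=\tfrac12$, orthogonal directions: the complement rectangles have $\lambda_1=\pi^2(1/d_1^2+1/d_2^2)=\tfrac12\pi^2L^2f_\mu^2$, giving $L^2/\lambda_1=2/(\pi^2f_\mu^2)>F_\infty(\theta)=1/(\pi^2f_\mu^2)$.) The paper's recovery sequence does not superimpose: it \emph{stacks} $n$ rectangles of heights $h_j=\beta_j\sqrt{\langle M\xi_j,\xi_j\rangle}/I$ and, in the $j$-th rectangle, draws segments orthogonal to $\xi_j$ with spacing $\eps_j=\eps\sqrt{\langle M\xi_j,\xi_j\rangle}$. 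This tuning makes \emph{every} complement polygon satisfy $\lambda_1^M\ge\pi^2/\eps^2$ while the total length is $\sim 1/(\eps I)$, so that $\ell^2/\lambda_1^M\le 1/(\pi^2I^2)$ exactly. This separation-with-tuned-heights is the key combinatorial idea missing from your construction.
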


This has several consequences, as soon as we single out all those varifolds $\theta\in \prob$
that minimize the functional $F_\infty$.
For $x\in\cOmega$, we let $\sigma_{\max}(x)$ denote the \emph{largest eigenvalue}
of $A(x)$, while $\sigma_{\min}(x)$ denotes its smallest eigenvalue.

\begin{theorem}\label{teomin}
A varifold  $\theta_\infty\in\prob$ is a minimizer of $F_\infty$ if and only if
it can be disintegrated as
$\theta_\infty=f_\infty(x)\,dx\otimes
\nu^\infty_x$, where $f_\infty$ is the $L^1$ function defined as
\begin{equation}
\label{defbestf}
f_\infty(x)=\frac{1/\sqrt{\sigma_{\max}(x)}}{\int_\Om 1/\sqrt{\sigma_{\max}(x)}dx}
\end{equation}
while, for a.e. $x\in\Omega$,  $\nu^\infty_x\in\Prob(\sfera)$ is any probability measure
supported on a set made up of (normalized) eigenvectors of $A(x)$, relative
to $\sigma_{\max}(x)$.

The minimizer is unique if and only if $A(x)$ is \emph{purely anisotropic}, that is, if
and only if $\sigma_{\min}(x)<\sigma_{\max}(x)$ occurs at a.e. $x\in\Omega$. In any case,
the first marginal $\mu$ of a minimizer $\theta_\infty$ is necessarily the function in \eqref{defbestf}.
\end{theorem}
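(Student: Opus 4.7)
The plan is to decouple the minimization of $F_\infty$ into two successive steps: (a) a pointwise optimization over the fiber measures $\nu_x$ with the marginal $\mu$ held fixed, and (b) a subsequent variational problem for the Lebesgue density $f_\mu$. The point of departure is the elementary bound $\sqrt{\langle A(x)y,y\rangle}\le\sqrt{\sigma_{\max}(x)}$, valid for every $y\in\mathbb{S}^1$ and with equality if and only if $y$ is a unit eigenvector of $A(x)$ relative to $\sigma_{\max}(x)$. Integrating against an arbitrary $\nu_x\in\Prob(\mathbb{S}^1)$ yields
\[
\int_{\mathbb{S}^1}\sqrt{\langle A(x)y,y\rangle}\,d\nu_x(y)\le\sqrt{\sigma_{\max}(x)},
\]
with equality exactly when $\nu_x$ is concentrated on the $\sigma_{\max}(x)$-eigenvectors.

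Plugged into \eqref{gammalimit}, this reduces the minimization to that of $\esssup_{x\in\Omega}1/\bigl(f_\mu(x)^2\sigma_{\max}(x)\bigr)$ over $\mu\in\Prob(\overline{\Omega})$. I would decompose $\mu=f_\mu\,dx+\mu_s$ into absolutely continuous and singular parts and observe that, since the essential supremum is taken with respect to Lebesgue measure, any singular component merely lowers $\int_\Omega f_\mu\,dx$ below $1$, which can only worsen the bound; hence an optimal $\mu$ must be absolutely continuous. Setting $M$ equal to the essential supremum, the inequality $f_\mu(x)\ge 1/\sqrt{M\,\sigma_{\max}(x)}$ holds a.e.\ in $\Omega$, and integrating gives
\[
1\ge\int_\Omega f_\mu\,dx\ge \frac{1}{\sqrt{M}}\int_\Omega \frac{dx}{\sqrt{\sigma_{\max}(x)}},
\]
so $M\ge\bigl(\int_\Omega\sigma_{\max}^{-1/2}\,dx\bigr)^2$. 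Equality in both steps forces $\mu_s=0$ and $f_\mu(x)=1/\sqrt{M\,\sigma_{\max}(x)}$ a.e., which combined with the normalization $\int_\Omega f_\mu\,dx=1$ identifies $f_\mu$ as the function $f_\infty$ in \eqref{defbestf}.

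It then remains to discuss uniqueness. If $A(x)$ is purely anisotropic, the $\sigma_{\max}(x)$-eigenspace is one-dimensional for a.e.\ $x\in\Omega$ and meets $\mathbb{S}^1$ in a single antipodal pair $\{\pm e(x)\}$; the antipodal symmetry built into the varifold formalism then pins down $\nu^\infty_x$ uniquely. Conversely, if $\sigma_{\min}(x)=\sigma_{\max}(x)$ on a set of positive Lebesgue measure, every probability measure on $\mathbb{S}^1$ at such points $x$ is admissible, producing a continuum of distinct minimizers; in all cases, however, the first marginal of any minimizer is forced to coincide with $f_\infty(x)\,dx$ by the previous step. I expect the least routine step to be the careful bookkeeping for the singular part of $\mu$ (ensuring the $\esssup$ in \eqref{gammalimit} is interpreted with respect to Lebesgue measure and not $\mu$), together with invoking a measurable selection result to actually construct $x\mapsto\nu^\infty_x$ as a $\mu$-measurable family when the eigenvector $e(x)$ depends on $x$ only measurably.
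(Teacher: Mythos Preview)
Your proposal is correct and follows essentially the same route as the paper: first optimize the fiber measures $\nu_x$ pointwise via the bound $\int_{\mathbb{S}^1}\sqrt{\langle A(x)y,y\rangle}\,d\nu_x\le\sqrt{\sigma_{\max}(x)}$, then argue that any singular part of $\mu$ is suboptimal, and finally optimize over the density $f_\mu$. The only cosmetic difference is that the paper dispatches the last step by invoking ``Cauchy--Schwarz'', whereas you write out the equivalent direct argument (set $M$ equal to the essential supremum, integrate the resulting pointwise lower bound on $f_\mu$); your concerns about measurable selection of $\nu_x^\infty$ are legitimate but minor, and the paper does not dwell on them either.
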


As $\prob$ is compact in the weak-* topology, from standard $\Gamma$-convergence theory (see~\cite{dalmaso})
we have
\begin{corollary}\label{cordist}
For $L>0$, let $\Si_L$ be a maximizer of problem \eqref{prob} and let $\theta_{\Sigma_L}$ be the associated
varifolds, according to \eqref{defT}.
Then, as $L\to\infty$, $\theta_{\Si_L}\weak\theta_\infty$
(up to subsequences) in the weak-* topology of $\prob$,
where $\theta_\infty$ is a minimizer of $F_\infty$.
\end{corollary}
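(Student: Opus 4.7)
The plan is to deduce the corollary as a direct application of the fundamental theorem of $\Gamma$-convergence, combined with the weak-* compactness of $\prob$. First I would observe that, for each $L>0$, the existence theorem stated earlier guarantees a maximizer $\Sigma_L$ of problem \eqref{prob}; by the very definition of $F_L$ in \eqref{fl}, the associated varifold $\theta_{\Sigma_L}$ is then a minimizer of $F_L$ on $\prob$, with finite value $L^2/\laa(\Om\setminus\Sigma_L)$.

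Next I would invoke the compactness of $\prob$. Since $\overline\Omega\times\mathbb{S}^1$ is a compact metric space, the unit ball of the dual of $C(\overline\Omega\times\mathbb{S}^1)$ is sequentially compact in the weak-* topology, and the convention adopted in the excerpt (identifying $\prob$ with equivalence classes under duality against $\Csym$) preserves this compactness. Hence, from the sequence $\{\theta_{\Sigma_L}\}_{L>0}$ I can extract a subsequence (not relabelled) weakly-* converging in $\prob$ to some $\theta_\infty$.

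It remains to identify $\theta_\infty$ as a minimizer of $F_\infty$. Fix an arbitrary $\theta\in\prob$; by Theorem~\ref{aniso} there exists a recovery sequence $\theta_L\weak\theta$ in $\prob$ with $F_L(\theta_L)\to F_\infty(\theta)$. Applying the $\Gamma$-liminf inequality to the sequence $\theta_{\Sigma_L}\weak\theta_\infty$, together with the minimality of each $\theta_{\Sigma_L}$, gives
\[
F_\infty(\theta_\infty)\le\liminf_{L\to\infty}F_L(\theta_{\Sigma_L})\le\liminf_{L\to\infty}F_L(\theta_L)=F_\infty(\theta).
\]
Since $\theta\in\prob$ was arbitrary, $\theta_\infty$ minimizes $F_\infty$, which is the desired conclusion.

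Since the heavy lifting has already been carried out in Theorem~\ref{aniso}, no genuine obstacle is expected here; the only point requiring mild care is to keep the varifold convention coherent, ensuring that compactness, the liminf inequality, and the recovery sequence all live on $\prob$ (equivalence classes) rather than on $\Prob(\cil)$ itself. This is handled once and for all by the agreement fixed before Theorem~\ref{aniso}, so the argument is essentially a textbook application of \cite{dalmaso}.
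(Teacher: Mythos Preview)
Your proposal is correct and follows exactly the route the paper indicates: the paper does not even spell out a proof, merely remarking that the corollary follows from the weak-* compactness of $\prob$ together with standard $\Gamma$-convergence theory (citing \cite{dalmaso}). Your argument is precisely this standard deduction, with the compactness and the minimality chain made explicit.
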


In spite of its abstract formulation, this corollary has interesting consequences in
the applications. In particular, it enables us to determine the asymptotic density
(length per unit area) of the optimal sets $\Sigma_L$, as well as their \emph{local}
direction (distribution of the unit normal on the projective space),  when $L\to\infty$:
\begin{theorem}\label{teodist}
For $L>0$, let $\Si_L$ be a maximizer of problem \eqref{prob}.
For every square $Q\subset\Omega$,
\begin{equation}\label{asopt}
\lim_{L\to\infty}\frac{\haus(\Sigma_L\cap Q)}{\haus(\Si_L)}=\int_Q f_\infty(x)\,dx\qquad
\text{(optimal density of length)}
\end{equation}
where $f_\infty$ is as in \eqref{defbestf}. Moreover,
if $Q$ is contained in the anisotropy region where $\sigma_{\min}(x)<\sigma_{\max}(x)$,
for every $\psi\in C(\sfera)$ such that $\psi(-y)=\psi(y)$ one has
\begin{equation}
\label{normopt}
\lim_{L\to\infty}\frac 1  {\haus(Q\cap\Sigma_L)}
\int_{Q\cap\Sigma_L} \psi\bigl(\xi_L(x)\bigr) \,d\haus(x)
=
\dfrac{\int_Q f_\infty(x)\psi\bigl(\xi(x)\bigr)\,dx}
{\int_Q f_\infty(x)\,dx},
\end{equation}
where $\xi_L(x)$ is the unit normal to $\Sigma_L$ at $x\in\Sigma_L$, while $\xi(x)$ is
the (unique up
to the orientation) eigenvector of $A(x)$  relative to $\sigma_{\max}(x)$.
Finally,
\begin{equation}\label{asymptotically}
 \lim_{L\to \infty} \frac{L^2}{\laa(\Oms_L)}=\frac{\left(\int_{\Om}1/\sqrt{\sigma_{\max}(x)}dx\right)^2}{\pi^2}.
\end{equation}
\end{theorem}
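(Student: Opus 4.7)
The plan is to deduce all three statements from Corollary \ref{cordist} (weak-* precompactness of $\{\theta_{\Sigma_L}\}$ with every subsequential limit being a minimizer of $F_\infty$), combined with the rigidity of minimizers provided by Theorem \ref{teomin}, which forces the first marginal to be exactly $f_\infty(x)\,dx$ and the conditional measures $\nu^\infty_x$ to be supported on eigenvectors of $A(x)$ for $\sigma_{\max}(x)$. In particular, even if the global minimizer of $F_\infty$ is not unique when $A(x)$ fails to be purely anisotropic on a set of positive measure, the first marginal is always the same, and the conditional distributions are uniquely determined (up to antipodal symmetry) on the anisotropy region, where $\sigma_{\min}(x)<\sigma_{\max}(x)$.

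For \eqref{asopt}, I would extract a subsequence along which $\theta_{\Sigma_L}\weak\theta_\infty$, and look at the first marginals: the marginal of $\theta_{\Sigma_L}$ is exactly $\mu_{\Sigma_L}=\haus\niv\Sigma_L/\haus(\Sigma_L)$ evaluated against cylinder test functions $\varphi(x,y)=\phi(x)$, while the marginal of $\theta_\infty$ is $f_\infty(x)\,dx$ by Theorem \ref{teomin}. Since $\partial Q$ has Lebesgue measure zero, the measure $f_\infty\,dx$ gives no mass to $\partial Q$, and a standard portmanteau-type approximation (squeezing $\mathbf{1}_Q$ between continuous functions supported slightly inside and slightly outside $Q$) turns weak-* convergence of marginals into $\mu_{\Sigma_L}(Q)\to\int_Q f_\infty\,dx$, which is \eqref{asopt}. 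Because the limiting first marginal is the \emph{same} for every subsequential limit, the full sequence converges.

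For \eqref{normopt}, fix $Q$ inside the anisotropy region and $\psi\in C(\sfera)$ with antipodal symmetry. On $Q$, Theorem \ref{teomin} tells us that $\nu^\infty_x$ is supported on the two antipodal unit eigenvectors $\pm\xi(x)$ relative to $\sigma_{\max}(x)$ (uniquely defined for a.e. $x\in Q$); by antipodal symmetry of $\psi$ this gives $\int_\sfera\psi\,d\nu^\infty_x=\psi(\xi(x))$. Applying the weak-* convergence \eqref{weakconvergenceT} to (approximations of) the test function $(x,y)\mapsto\mathbf{1}_Q(x)\psi(y)$—again justified because $\partial Q$ is $\mu$-null—gives
\[
\frac{1}{\haus(\Sigma_L)}\int_{Q\cap\Sigma_L}\psi(\xi_L(x))\,d\haus(x)\;\longrightarrow\;\int_Q f_\infty(x)\psi(\xi(x))\,dx,
\]
and dividing by the corresponding limit \eqref{asopt} in the denominator yields \eqref{normopt}. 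Uniqueness of the restriction of $\theta_\infty$ to $Q\times\sfera$ again allows us to upgrade subsequential convergence to full-sequence convergence.

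Finally, \eqref{asymptotically} is the convergence-of-minima consequence of Theorem \ref{aniso}: since $\prob$ is weak-* compact, the $\Gamma$-convergence $F_L\to F_\infty$ is equicoercive, hence $\min F_L\to\min F_\infty$. Plugging any minimizer $\theta_\infty=f_\infty(x)\,dx\otimes\nu^\infty_x$ from Theorem \ref{teomin} into \eqref{gammalimit}, the inner integral evaluates to $\sqrt{\sigma_{\max}(x)}$, so $f_\infty(x)\int\sqrt{\langle A(x)y,y\rangle}\,d\nu^\infty_x(y)$ is the constant $\bigl(\int_\Omega 1/\sqrt{\sigma_{\max}}\,dx\bigr)^{-1}$ (in particular the essential supremum in \eqref{gammalimit} is not just a sup but is attained almost everywhere), giving exactly the right-hand side of \eqref{asymptotically}. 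The main technical point throughout is the approximation argument that extends weak-* testing from $\Csym$ to indicator-type functions on $Q$; this is routine provided $\partial Q$ is Lebesgue-null, which is automatic for a square. The only genuinely delicate step is pinpointing that on the anisotropy region the conditional measures $\nu^\infty_x$ are determined up to antipodal identification, but that is precisely what Theorem \ref{teomin} guarantees.
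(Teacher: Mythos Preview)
Your proposal is correct and follows essentially the same route as the paper: invoke Corollary~\ref{cordist} for subsequential weak-* convergence to a minimizer $\theta_\infty$, use Theorem~\ref{teomin} to pin down the first marginal as $f_\infty(x)\,dx$ (and the conditionals on the anisotropy region), localize the weak-* convergence to $Q\times\sfera$ via the fact that $\theta_\infty(\partial Q\times\sfera)=0$, and then test with $\varphi\equiv 1$ and $\varphi(x,y)=\psi(y)$ to obtain \eqref{asopt} and \eqref{normopt}; \eqref{asymptotically} is the convergence-of-minima statement from $\Gamma$-convergence. If anything, you are slightly more explicit than the paper about upgrading from subsequential to full-sequence convergence by exploiting uniqueness of the relevant limiting quantities.
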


The meaning of \eqref{asopt} is that, if $L$ is large, in order to solve \eqref{prob} one
should spread $\Sigma$ over $\Omega$ with a density (length per unit area) roughly proportional
to $f_\infty(x)$. Moreover, by \eqref{normopt}, the optimal set $\Sigma_L$ should be
directed in such a way that its unit normal $\xi_L(x)$ is essentially parallel to the
eigenvector $\xi(x)$ relative to the largest eigenvalue of $A(x)$ (this is uniquely determined only
in the region where $A(x)$ is anisotropic).

The initial assumption on the continuity of $A(x)$ plays an
important role in our proofs, since it enables us to localize our
estimates,  as if $A(x)$ were  constant at small scales (the
analog problem when $A(x)$ is merely measurable and satisfies
\eqref{unifell} is entirely open). On the other hand, all our
results are easily extended to cover the case where $A(x)$ is
\emph{piecewise continuous}, i.e. continuous over disjoint open
sets $\Omega_i$ with smooth boundaries, such that $\cOmega=\bigcup
\cOmega_i$. This extension would be a merely technical one and is
not pursued in detail.

Finally, we point out that regularity of the optimal sets $\Sigma_L$ is
an open problem. In particular, it is not known whether $\Sigma_L$ satisfies
no-loop or blow-up properties, similar to those of the minimizers
of the irrigation problem (see \cite{butste,santil}).

\bigskip
\textbf{Notation}
For a Borel set $E\subset \R^2$ we denote by $|E|$  the two-dimensional Lebesgue measure
of $E$ and by $\haus(E)$ its one-dimensional Hausdorff measure.
If $X$ is a compact space, $\Prob(X)$ denotes the space of all (Borel) probability measures over $X$.
If $\mu$ is a measure, ``$\mu$-a.e.'' is an abbreviation of ``$\mu$-almost everywhere''
(if $\mu$ is omitted, Lebesgue measure is understood), while
``w.r.to $\mu$'' stands for ``with respect to $\mu$''.

\section{Some auxiliary estimates}

In this section we prove some preliminary results
on the first Dirichlet eigenvalue,
when the coefficient matrix $A(x)$ is constant
and  the Dirichlet condition is prescribed along a generic compact set $D$.

More precisely, let $D\subset\cOmega$ be a compact set with finitely many
connected components, such that
$0<\haus(D)<\infty$: such a set is $1$-rectifiable (see \cite{ambtil}) and has positive capacity, therefore
given $u\in H^1(\Omega)$ the Dirichlet condition
\[
u(x)=0\quad\text{for $\haus$-a.e. $x\in D$}
\]
is meaningful and defines a nontrivial closed subspace of $H^1(\Omega)$
(it can be interpreted as a trace condition, since by rectifiability every connected component
of $D$ can be covered by a Lipschitz curve). Then, one can define the first Dirichlet eigenvalue
of the domain $\Omega$
(relative to the coefficient matrix $A(x)$) with Dirichlet condition along $D$, as follows:
\begin{equation}
\label{lDD}
\lambda_1^A(\Omega;D)
:= \min_{\genfrac{}{}{0pt}{}{u\in H^{1}(\Omega)\setminus\{0\}} {\text{$u\equiv 0$ on $D$}}}
\frac{\int_{\Om} \langle A(x)\nabla u(x),\nabla u(x)\rangle dx}{\int_{\Om}u(x)^2 dx}.
\end{equation}
Observe that this is more general than \eqref{eigen}, where the minimization takes place
in a $H^1_0$ environment: given $\Sigma$, the eigenvalue $\lambda_1^A(\Omega\setminus\Sigma)$
in \eqref{eigen} is a particular case of $\lambda_1^A(\Omega;D)$, namely when $D=\Sigma\cup\partial\Omega$
(as $\Sigma$ is connected while $\partial\Omega$, being Lipschitz, has finitely many connected
components, the same is true of $D$).

When $A(x)\equiv M$ is constant,
equal to a $2\times 2$ positive definite matrix $M$
independent of $x$, it is possible to give an upper bound to $\lambda_1^M(\Omega;D)$ in
terms of some geometric quantities, among which an  important role is played by the
``Riemannian'' length
 \begin{equation}\label{hausm}
\haus_M(D):=\int_{D} \sqrt{\langle M\xi(x),\xi(x)\rangle}\,  d\haus(x),
\end{equation}
where $\xi(x)$ is (a measurable selection of) the unit \emph{normal} to $D$ at the point $x\in D$
(by rectifiability, this normal is well defined, up to the orientation, at $\haus$-a.e. $x\in D$).
\begin{theorem}\label{upper}
Let $M$ be a positive definite $2\times 2$ matrix, and let
$D\subset\cOmega$ be a compact connected set with $\kappa$ connected components, such that $0<\haus(D)<\infty$. Then
%Given $L>0$ and $\Si\in\Alo$, it holds
\begin{equation}\label{estb}
\lambda_1^M(\Omega;D)\leq\frac{{\pi^2}}{4t^2}\left(1+\frac{\kappa\pi t\det M^{1/2}}{\haus_M(D)}\right),
\end{equation}
where the number $t$ is defined as
\begin{equation}\label{deft}
 t=\dfrac{|\Om|}{\haus_M(D) +\sqrt{\haus_M(D)^2+\kappa\pi |\Om|\det M^{1/2}}}.
\end{equation}
\end{theorem}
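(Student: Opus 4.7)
The plan is to reduce to the isotropic case $M = I$ via the linear change of variables $y = M^{-1/2}x$, and then to prove the resulting isotropic estimate by inserting a trigonometric tubular test function into the Rayleigh quotient. Under this change, $\Omega$ becomes $\widetilde\Omega := M^{-1/2}\Omega$ with $|\widetilde\Omega| = |\Omega|/\det M^{1/2}$, while $D$ becomes $\widetilde D := M^{-1/2}D$, which has the same number of connected components $\kappa$. The identity $\langle M\nabla_x u,\nabla_x u\rangle = |\nabla_y u|^2$ gives $\lambda_1^M(\Omega;D) = \lambda_1(\widetilde\Omega;\widetilde D)$, while the two-dimensional identity $J^T M J = (\det M)M^{-1}$ (with $J$ the $90^\circ$ rotation) implies $\sqrt{\langle M\xi,\xi\rangle} = \det M^{1/2}\sqrt{\langle M^{-1}\tau,\tau\rangle}$ for orthonormal $(\tau,\xi)$, so that $\haus_M(D) = \det M^{1/2}\,\haus(\widetilde D)$. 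A direct substitution then shows that the parameter $t$ in \eqref{deft} coincides with the analogous isotropic parameter for $(\widetilde\Omega,\widetilde D)$, and that \eqref{estb} reduces to the corresponding isotropic inequality.

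For the isotropic case, I would use the trial function
\[
u(x) = \sin\!\left(\frac{\pi}{2t}\min\bigl(\mathrm{dist}(x,D),\,t\bigr)\right),
\]
which belongs to $H^1(\Omega)$, vanishes $\haus$-a.e.\ on $D$, and equals $1$ outside the tubular set $D_t := \{x : \mathrm{dist}(x,D) < t\}$. Applying the coarea formula to the $1$-Lipschitz function $\rho(x) := \mathrm{dist}(x,D)$, with $\ell(s) := \haus(\{\rho = s\}\cap\Omega)$, and using $\sin^2 + \cos^2 = 1$ to collapse the contribution from $\Omega\setminus D_t$, one obtains the compact identities
\[
\int_\Omega u^2\,dx = |\Omega| - I,\qquad \int_\Omega |\nabla u|^2\,dx = \Bigl(\frac{\pi}{2t}\Bigr)^{\!2} I,\qquad I := \int_0^t \cos^2\!\Bigl(\frac{\pi s}{2t}\Bigr)\ell(s)\,ds.
\]

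The main geometric input is the classical Minkowski-tube estimate $\ell(s) \leq 2\haus(D) + 2\pi\kappa s$, a consequence of the fact that the boundary of the $s$-neighbourhood of a compact connected planar set of length $L$ has total length at most $2L + 2\pi s$, summed over the $\kappa$ components of $D$. Combined with the explicit integrals $\int_0^t \cos^2(\pi s/2t)\,ds = t/2$ and $\int_0^t s\cos^2(\pi s/2t)\,ds = t^2/4 - t^2/\pi^2$, this yields $I \leq Lt + \kappa t^2(\pi/2 - 2/\pi) < Lt + \kappa\pi t^2$ with $L := \haus(D)$. The crucial algebraic observation is that squaring out \eqref{deft} shows it is calibrated precisely so that $|\Omega| = 2Lt + \kappa\pi t^2$; hence $|\Omega| - I \geq Lt$, and inserting both estimates into the Rayleigh bound $\lambda_1 \leq (\pi/2t)^2 I/(|\Omega|-I)$ (which is increasing in $I$) produces exactly $\frac{\pi^2}{4t^2}(1 + \kappa\pi t/L)$, i.e.\ \eqref{estb} after the reduction.

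The only delicate step is the geometric inequality $\ell(s) \leq 2L + 2\pi\kappa s$: it is well known for Minkowski sausages of continua, but the level set $\{\rho = s\}$ is not generally regular, and moreover $D_t$ can protrude outside $\Omega$. The second issue is benign, since restricting to $\Omega$ only decreases $\ell(s)$ and therefore preserves the direction of the inequality; the first is handled by the classical perimeter estimate for parallel sets. All remaining steps are either routine coarea computations or elementary one-variable calculus; the argument closes because the parameter $t$ in \eqref{deft} has been tuned precisely to make $|\Omega| - I \geq Lt$ in the final Rayleigh-quotient simplification.
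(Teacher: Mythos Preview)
Your proof is correct and follows the same route as the paper: both reduce to the isotropic case via the linear change of variables $x\mapsto M^{-1/2}x$, using the two-dimensional cofactor identity to show $\haus_M(D)=\det M^{1/2}\,\haus(M^{-1/2}D)$ and hence that the parameter $t$ is invariant under this transformation. The only difference is that the paper then \emph{cites} the isotropic estimate (Theorem~2.4 of \cite{tilzuc}), whereas you supply its proof directly via the tubular test function $\sin\bigl(\tfrac{\pi}{2t}\min(\mathrm{dist}(\cdot,D),t)\bigr)$, the coarea formula, and the parallel-set length bound $\ell(s)\le 2\haus(D)+2\pi\kappa s$---which is exactly the argument behind the cited result, so your write-up is simply more self-contained.
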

\begin{proof}
Set  $N=M^{-1/2}$. Recalling \eqref{lDD}, by the change of variable $z=Nx$ in the two integrals
we have
\begin{equation*}%\label{other}
 \lambda_1^M(\Omega;D)=
\min_{\genfrac{}{}{0pt}{}{u\in H^{1}(\Omega)}{\text{$u\equiv 0$ on $D$}}}
\frac{\int_{\Om} \langle M\nabla u(x),\nabla u(x)\rangle dx}{\int_{\Om}u(x)^2 dx}
=
 \min_{\genfrac{}{}{0pt}{}{v\in H^{1}({N\Omega})}
{\text{$v\equiv 0$ on $ND$}}
} \frac{\int_{N\Om}  |\nabla v(z)|^2 dz}{\int_{N\Om} v(z)^2dz},
 \end{equation*}
so that $\lambda_1^M(\Omega;D)$ is
the first eigenvalue of the Laplacian
on the deformed domain $N\Omega$ (the image of $\Omega$ through the linear map $N$),
with Dirichlet conditions along $ND$ (observe that also $ND$ has $\kappa$ connected components).
Therefore, applying Theorem 2.4 of \cite{tilzuc} combined with Remark~2.5 therein,
we obtain
\begin{equation}\label{estb2}
\lambda_1^M(\Omega;D)
\leq\frac{{\pi^2}}{4 T^2}\left(1+\frac{\kappa\pi T}{\haus(ND)}\right),
\end{equation}
where
\begin{equation}\label{tmsecond}
T:=\dfrac{|N\Om|}{\haus(ND) +\sqrt{\haus(ND)^2+\kappa\pi|N\Om|}}.
\end{equation}
Therefore, to prove \eqref{estb}, it suffices to relate $T$ and $t$.

Using the \emph{area formula} (see for example \cite[lemma 2.91]{amfupa}), we have
\begin{equation}\label{change}
\haus(ND)=\int_{D} | N\tau (x)| d\haus(x),
\end{equation}
where $\tau(x)$ is the unit tangent vector to $D$ at the point $x\in D$.
Now, for $\haus$-a.e. $x\in D$,  $\tau=\tau(x)$ is orthogonal to the unit normal $\xi=\xi(x)$ that
appears in \eqref{hausm}: therefore, since $N^2=M^{-1}$,
we have
\begin{equation*}
|N\tau |^2=
\langle M^{-1} \tau,\tau\rangle
=\det M^{-1}
\langle \widehat{M} \tau,\tau\rangle
=\det M^{-1}
\langle M \xi,\xi\rangle
\end{equation*}
where $\widehat{M}$ is the cofactor matrix of $M$ (the last equality is typical of dimension two).
Taking square roots and integrating,
we find from \eqref{change} and the preceding formula
\begin{equation*}%\label{change2}
\haus(ND)
=\det M^{-1/2}
\int_{D} \sqrt{ \langle M \xi(x),\xi(x)\rangle}\,d\haus(x)
=\det M^{-1/2}\haus_M(D).
\end{equation*}
Moreover,
$|N\Om|=\det M^{-1/2}|\Om|$. Plugging the last two identities in \eqref{tmsecond},
we see that \eqref{estb2} is equivalent to \eqref{estb}.
\end{proof}

The upper bound \eqref{estb} will be at the basis of the $\Gamma$-liminf inequality
\eqref{gammainf}, and is therefore asymptotically optimal when
$\haus(D)\to\infty$ (see \cite{til} for a similar estimate for
 the compliance functional).

We shall also need the following result.

\begin{theorem}[The first eigenvalue of thin domains]\label{thin}
Let $E\subset\R^2$ be a bounded open set contained in a strip
\[
S=\{x\in\R^2\,|\,\, 0<\langle x,\xi\rangle<h\}
\]
of width $h>0$, where $\xi$ is the direction  orthogonal to $S$ ($|\xi=1|$).
If $M\in\R^{2\times 2}$ is positive definite, then
\begin{equation}\label{miracle3}
\lambda_1^M(E)>  \pi^2 \frac{\langle M\xi,\xi\rangle}{h^2}.
\end{equation}
\end{theorem}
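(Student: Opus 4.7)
The plan is to reduce the anisotropic statement to the classical isotropic one by the same linear change of variable already used in Theorem~\ref{upper}, and then to identify the width of the transformed strip.

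Set $N=M^{-1/2}$ and, for $u\in H^1_0(E)$, let $v(z):=u(Nz^{-1}\cdot z)= u(M^{1/2}z)$, so that $v\in H^1_0(NE)$. Exactly as in the proof of Theorem~\ref{upper}, the change of variable $z=Nx$ yields
\[
\langle M\nabla u(x),\nabla u(x)\rangle\, dx = |\nabla v(z)|^2\,(\det M)^{1/2}dz,\qquad
u(x)^2\,dx = v(z)^2\,(\det M)^{1/2}dz,
\]
and hence $\lambda_1^M(E)=\lambda_1(NE)$, where on the right one has the first Dirichlet eigenvalue of the Laplacian on the deformed open set $NE\subset\R^2$.

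Next I would compute the image of the strip $S$ under $N$. Since $x=M^{1/2}z$, the condition $0<\langle x,\xi\rangle<h$ becomes $0<\langle z, M^{1/2}\xi\rangle<h$; thus $NS$ is itself a strip, with normal direction $M^{1/2}\xi/|M^{1/2}\xi|$ and width
\[
h':=\frac{h}{|M^{1/2}\xi|}=\frac{h}{\sqrt{\langle M\xi,\xi\rangle}}.
\]
In particular $NE\subset NS$ is a bounded open set contained in a strip of width $h'$.

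The final step is the classical strict Poincaré inequality for the Laplacian: if a bounded open set $E'\subset\R^2$ lies in a strip of width $h'$, then $\lambda_1(E')>\pi^2/h'^2$. One proves this by rotating so that the strip becomes $\R\times(0,h')$, extending any $v\in H^1_0(E')$ by zero to the infinite strip, applying the one-dimensional Poincaré inequality $\int_0^{h'}(\partial_{t}v)^2\,dt\geq (\pi^2/h'^2)\int_0^{h'}v^2\,dt$ slicewise, and integrating; strictness comes from the fact that equality in the one-dimensional Poincaré inequality forces $v(\cdot,t)$ to be a nonzero multiple of $\sin(\pi t/h')$ on $(0,h')$ for a.e.~$t$ in the transverse direction, which is incompatible with $v$ having bounded (hence compactly supported in the strip direction) support. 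Applying this to $E'=NE$ and $h'=h/\sqrt{\langle M\xi,\xi\rangle}$ gives
\[
\lambda_1^M(E)=\lambda_1(NE)>\frac{\pi^2}{(h')^2}=\pi^2\frac{\langle M\xi,\xi\rangle}{h^2},
\]
which is \eqref{miracle3}. The only subtle point is the strictness of the Poincaré inequality in the last step; everything else is routine linear algebra and a clean application of the change of variable already established.
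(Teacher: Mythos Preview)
Your proof is correct and follows essentially the same route as the paper: the change of variable $z=M^{-1/2}x$ reduces $\lambda_1^M(E)$ to the first Laplace eigenvalue of $NE$, contained in a strip of width $h'=h/\sqrt{\langle M\xi,\xi\rangle}$. The only difference is the final elementary step, where the paper obtains strictness by boxing the bounded set $NE$ in a rectangle $R$ of size $h'\times k$ and invoking the explicit formula $\lambda_1(R)=\pi^2/{h'}^2+\pi^2/k^2>\pi^2/{h'}^2$; this is slightly cleaner than your equality-case analysis of the slicewise Poincar\'e inequality (where, to be precise, the strictness really comes from the fact that equality in the Rayleigh quotient would also force $\int|\partial_s v|^2=0$, hence $v$ independent of the long direction $s$, which is incompatible with compact support).
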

\begin{proof}
Performing the linear change of variable
$x=M^{1/2}y$ in the integrals of the Rayleigh quotient,
the anisotropic eigenvalue $\lambda_1^M(E)$
is easily seen to coincide with the first Laplace eigenvalue of a new domain $\widehat E$, namely
\[
\lambda_1^M(E)=
\min_{\genfrac{}{}{0pt}{}{u\in H^1_0(E)}{u\not\equiv 0}}
\dfrac{\int_{E} \langle M\nabla u(x),\nabla u(x)\rangle\,dx}
{\int_{E} u(x)^2\,dx}
=
\min_{\genfrac{}{}{0pt}{}{v\in H^1_0(\widehat{E})}{v\not\equiv 0}}
\dfrac{\int_{\widehat{E}} |\nabla v(y)|^2\,dy}
{\int_{\widehat{E}} v(y)^2\,dy}
=
\lambda_1^I(\widehat E)
\]
where $\widehat E=M^{-1/2}E$ is now contained in the
new strip $\widehat S=M^{-1/2}S$. Observe that
\[
\widehat{S}
=\{y\in\R^2\,|\,\, 0<\langle y,M^{1/2}\xi\rangle<h\},
\]
so that the width of the strip $\widehat S$ is given by
\begin{equation}
\label{newwidth}
\widehat h=\dfrac {h}{\left|
M^{1/2}\xi\right|}
=
\dfrac {h}{\sqrt{
\langle M\xi,\xi\rangle}}.
\end{equation}
Therefore, since $\widehat{E}$ is bounded, it can be boxed in a thin rectangle $R$ of size
$\widehat{h}\times k$ for some $k>0$ (e.g. $k=\mathop{\rm diam}(\widehat{E})$). Then,
by monotonicity and the explicit expression for the first Laplace
eigenvalue
of a rectangle, we obtain
\[
\lambda_1^I(\widehat E)\geq \lambda_1^I(R)=
\dfrac{\pi^2}{\widehat{h}^2}+
\dfrac{\pi^2}{k^2}>
\dfrac{\pi^2}{\widehat{h}^2}
\]
which combined with \eqref{newwidth} proves \eqref{miracle3}.
\end{proof}

\section{The $\Gamma$-liminf inequality}

This section is devoted to proving that the $\Gamma$-liminf functional is
not smaller
 than the functional $F_\infty$ defined in \eqref{gammalimit}.

\begin{proposition}[$\Gamma$-liminf inequality]\label{prop8}
For every  varifold $\theta\in\prob$ and every sequence $\{\theta_L\}\subset\prob$ such that $\theta_L\weak\theta$, it holds
\begin{equation}\label{gammainf}
  \liminf_{L\to \infty} F_L(\theta_L)\geq F_\infty(\theta).
\end{equation}
\end{proposition}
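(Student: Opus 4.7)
The plan is to localize the eigenvalue estimate around an essentially arbitrary point $x_0\in\Omega$ by applying Theorem~\ref{upper} on a small square $Q\ni x_0$ with the coefficient matrix frozen at $A(x_0)$, then pass to the limit $L\to\infty$ followed by $Q\downarrow x_0$, exploiting Lebesgue differentiation of $\mu$ together with the disintegration $\theta=\mu\otimes\nu_x$ of Definition~\ref{defFi}. We may assume $\liminf_L F_L(\theta_L)<+\infty$ and, along a subsequence realizing this liminf, that $\theta_L=\theta_{\Sigma_L}$ for some $\Sigma_L\in\Alo$ with $\haus(\Sigma_L)=L$.

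Fix an open square $Q\ni x_0$ with $\mu(\partial Q)=0$, and set $M:=A(x_0)$. Any function vanishing on the set $D_L:=(\Sigma_L\cap\overline{Q})\cup\partial Q$, extended by zero to $\Omega$, is admissible in~\eqref{eigen}; this monotonicity yields
\[
\laa(\Oms_L)\le\lambda_1^A(Q;D_L).
\]
Since $\Sigma_L$ is a continuum with finite $\haus$ measure it is arc-connected, hence every component of $\Sigma_L\cap\overline{Q}$ meets $\partial Q$ (for $L$ large, so that $\Sigma_L\not\subset\overline{Q}$) and $D_L$ is connected ($\kappa=1$). By continuity of $A$, one has $A(x)\le(1+\eta)M$ on $\overline{Q}$ with $\eta=\eta(Q)\to 0$ as $Q\downarrow x_0$, so $\lambda_1^A(Q;D_L)\le(1+\eta)\lambda_1^M(Q;D_L)$; Theorem~\ref{upper} applied with $\kappa=1$, together with $\haus_M(D_L)\to\infty$, then gives
\[
\lambda_1^M(Q;D_L)\le\frac{\pi^2\,\haus_M(D_L)^2}{|Q|^2}(1+o(1)),\qquad L\to\infty.
\]

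To express the right-hand side through $\theta$, split $\haus_M(D_L)\le\haus_M(\Sigma_L\cap\overline{Q})+\haus_M(\partial Q)$; the second summand is $O(|Q|^{1/2})$ and becomes negligible after division by $L$. For the first, choose continuous cutoffs $\rho_\delta$ with $\rho_\delta\ge\chi_{\overline{Q}}$ and $\rho_\delta\downarrow\chi_{\overline{Q}}$, and test the weak-$*$ convergence $\theta_L\weak\theta$ against $\rho_\delta(x)\sqrt{\langle My,y\rangle}$ (symmetric in $y$, hence in $\Csym$); letting $\delta\to 0$ and using $\mu(\partial Q)=0$,
\[
\limsup_{L\to\infty}\frac{\haus_M(\Sigma_L\cap\overline{Q})}{L}\le\int_Q\phi_M\,d\mu,\qquad \phi_M(x):=\int_\sfera\!\sqrt{\langle My,y\rangle}\,d\nu_x(y).
\]
Combining the previous displays,
\[
\liminf_{L\to\infty}\frac{L^2}{\laa(\Oms_L)}\ge\frac{1}{\pi^2(1+\eta)}\,\frac{1}{\bigl(|Q|^{-1}\!\int_Q\phi_M\,d\mu\bigr)^2}.
\]

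Finally, let $Q\downarrow x_0$. For Lebesgue a.e.\ $x_0\in\Omega$, the differentiation theorem for $\mu$ together with the standard disintegration argument (see~\cite{amfupa}) shows
\[
\frac{1}{|Q|}\int_Q\phi_M\,d\mu\longrightarrow f_\mu(x_0)\int_\sfera\!\sqrt{\langle A(x_0)y,y\rangle}\,d\nu_{x_0}(y)
\]
with $M=A(x_0)$; at the Lebesgue density points where $f_\mu(x_0)=0$ the same quantity tends to $0$, forcing $\liminf_L L^2/\laa(\Oms_L)=+\infty$ and matching $F_\infty=+\infty$ there. Since $\liminf_L L^2/\laa(\Oms_L)$ is independent of $x_0$, passing to the essential supremum over $x_0\in\Omega$ produces exactly $F_\infty(\theta)$. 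The delicate point — and the main obstacle — is the control on the number of components $\kappa$ entering Theorem~\ref{upper}: the device of adjoining $\partial Q$ to $\Sigma_L\cap\overline{Q}$ reduces $\kappa$ to $1$, but it relies on the arc-connectivity of the continuum $\Sigma_L$, a classical though nontrivial property of compact connected sets with finite one-dimensional Hausdorff measure.
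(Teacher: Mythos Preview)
Your argument mirrors the paper's proof: localize to a small square $Q$ centered at $x_0$, freeze the coefficient matrix at $M=A(x_0)$, apply Theorem~\ref{upper} to $D_L=(\Sigma_L\cap\overline Q)\cup\partial Q$, pass to the weak-$*$ limit via cutoffs, and conclude by Lebesgue differentiation. Two points merit attention. First, $\theta_L=\theta_{\Sigma_L}$ only gives $\haus(\Sigma_L)\le L$, not equality; this is harmless, since the inequality is exactly what one needs when comparing the weak-$*$ limit (normalized by $\haus(\Sigma_L)$, cf.~\eqref{defT}) with the functional $F_L$ (normalized by $L$). Second, you assert $\haus_M(D_L)\to\infty$ and $\Sigma_L\not\subset\overline Q$ without justification. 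The paper deduces both from the observation that finiteness of the $\liminf$ forces $\laa(\Omega\setminus\Sigma_L)\to\infty$, hence $\Sigma_L\to\overline\Omega$ in the Hausdorff metric, which together with connectedness of $\Sigma_L$ yields $\haus(\Sigma_L\cap Q)\to\infty$ (see~\eqref{intersection} and~\cite{mostil}); your arc-connectivity argument for $\kappa=1$ is a clean alternative for the connectedness of $D_L$, but the asymptotic simplification of the bound in Theorem~\ref{upper} still requires $\haus_M(D_L)\to\infty$, which you should derive explicitly.
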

\begin{proof}
Passing if necessary to a subsequence (not relabelled), we may assume that the liminf is
a \emph{finite} limit.  By \eqref{fl} this implies that,
for $L$ large enough, every $\theta_L$ is of the kind $\theta_L=\theta_{\Sigma_L}$
as defined in \eqref{defT}, for a suitable set $\Sigma_L\in\Alo$.
Therefore, by \eqref{defT} and \eqref{weakconvergenceT}, the weak-* convergence
$\theta_L\weak \theta$ takes the concrete form
\begin{equation}\label{varifold}
\lim_{L\to\infty}\frac{1}{\haus(\Si_L)}\int_{\Si_L} \varphi(x,\xi_{\Si_L}(x))d\haus(x)
=
\int_{\Ots} \varphi%(x,y)
\,d\theta\quad
\forall
\varphi\in\Csym,
\end{equation}
where $\xi_{\Sigma_L}$ is the unit normal to $\Sigma_L$.
Similarly,
using \eqref{fl} and \eqref{gammalimit},
the inequality in \eqref{gammainf} is, after taking square roots,
equivalent to
\begin{equation}
\label{toprove}
\lim_{L\to \infty}\frac{L}{\laa(\Omega\setminus\Sigma_L)^{1/2}}
\geq
\frac{1}{\pi}\esssup_{x\in\Omega}\frac{1}{f_\mu(x)\int_{\mathbb{S}^1} \langle
A(x)y,y\rangle\, d\nu_x(y)},
\end{equation}
where the function $f_\mu$ and the measures $\{\nu_x\}$  are as in
Definition~\ref{defFi}.

The finiteness of the
limit in \eqref{toprove} entails
that $\laa(\Omega\setminus\Sigma_L)\to\infty$, and this in turn forces
the sets $\Sigma_L$ to converge to $\cOmega$ in the Hausdorff metric
(otherwise, a subsequence among the open sets $\Omega\setminus\Sigma_L$ would contain a ball
$B_r(x_L)$
of fixed
radius $r>0$, and by monotonicity we would
have $\laa(\Omega\setminus\Sigma_L)\leq \laa(B_r(x_L))$: by \eqref{unifell}
this bound would be uniform in $L$, a contradiction).

Now choose an arbitrary open square $Q$ such that
$\overline{Q}\subset\Omega$.
Since $\Sigma_L$ is connected, the convergence $\Sigma_L\to\cOmega$ implies
(see e.g. \cite{mostil}) that
\begin{equation}\label{intersection}
\lim_{L\to \infty} \haus(\Si_L\cap Q)=\infty
\end{equation}
and forces $\Sigma_L$ to cross the boundary $\partial Q$ for large $L$, so that
\begin{equation}
\label{DLconn}
(\Sigma_L\cap Q)\cup \partial Q\qquad
\text{is connected (if $L$ is large enough).}
\end{equation}
Now fix a number $\eps>0$, and
consider the matrix $M_Q:=A(x_0)$,
where $x_0$ is the center of the square $Q$:
since $A(x)$ is uniformly continuous,
the conditions
\begin{equation}\label{mmm}
 \langle A(x)y,y\rangle \leq (1+\eps)^2  \langle M_Qy,y\rangle \qquad \forall x\in Q, \, \forall
 y\in\mathbb{S}^1
\end{equation}
are satisfied
as soon as the diameter of $Q$ is small enough (depending only on $\eps$).
Therefore, using the inequality $L\geq \haus(\Sigma_L)$, the  monotonicity of the first
eigenvalue with respect to domain inclusion, and \eqref{mmm},
it follows that
\begin{equation}\label{eq0}
\frac{L}{\laa(\Oms_L)^{1/2}}
\geq
\frac{\haus(\Si_L)}{\laa(Q\setminus\Sigma_L)^{1/2}}
\geq \frac{\haus(\Si_L)}{(1+\eps)\la_1^{M_Q}(Q\setminus \Si_L)^{1/2}},
\end{equation}
provided that, as we  shall assume, $\mathop{\rm diam}(Q)$ is small enough.

Now
$\lambda_1^{M_Q}(Q\setminus \Si_L)$
is the first eigenvalue in $H^1(Q)$, with $M_Q$ as coefficient matrix and
Dirichlet condition along $(\Sigma_L\cap Q)\cup \partial Q$: therefore, it
can be estimated using
Theorem~\ref{upper}, applied with
$\Omega=Q$, $M=M_Q$ and $D=(\Sigma_L\cap Q)\cup \partial Q$
 (by \eqref{DLconn}, we also have $\kappa=1$
if $L$ is large).  Then \eqref{estb}, taking square roots,
reads
\begin{equation}
\label{estb22}
\lambda_1^{M_Q}(Q\setminus\Sigma_L)^{1/2}\leq
\frac{{\pi}}{2t_L}\left(1+\frac{\pi t_L \det M_Q^{1/2}}{\ell_L}\right)^{1/2},
\end{equation}
where $t_L$, according to \eqref{deft}, is given by
\begin{equation}\label{deftL}
 t_L=\dfrac{|Q|}{\ell_L +\sqrt{\ell_L^2+\pi |Q|\det {M_Q}^{1/2}}},
\end{equation}
while $\ell_L$, according to \eqref{hausm}, is the Riemannian length
\begin{equation}
\label{deflL}
\ell_L=\int_{(\Sigma_L\cap Q)\cup \partial Q} \sqrt{\langle M_Q\xi_L(x),\xi_L(x)\rangle}\,  d\haus(x),
\end{equation}
$\xi_L(x)$ being the unit normal to $(\Sigma_L\cap Q)\cup \partial Q$. We shall let
$L\to\infty$ in \eqref{eq0} and use \eqref{estb22},  hence
we are interested in
the asymptotics (as $L\to\infty$)
of both $t_L$ and $\ell_L$. As $M_Q=A(x_0)$,
the last equation and \eqref{unifell}
give
\[
\ell_L\geq
C^{-1/2}\haus\bigl((\Sigma_L\cap Q)\cup \partial Q\bigr)
>
C^{-1/2}\haus(\Sigma_L\cap Q),
\]
so that $\ell_L\to\infty$  by \eqref{intersection}.
Therefore, from \eqref{deftL} we find the asymptotics
\begin{equation}
\label{asyt}
 t_L
 \sim \dfrac{|Q|}{2\ell_L }\quad\text{as $L\to\infty$}
 \end{equation}
so that, letting $L\to\infty$ in \eqref{eq0},
and using \eqref{estb22}, \eqref{asyt}, we obtain
\begin{equation}\label{eq00}
\begin{split}
\lim_{L\to\infty}\frac{L}{\laa(\Oms_L)^{1/2}}
&\geq
\frac 1{1+\eps}\,\liminf_{L\to\infty}
\frac {\haus(\Si_L) \,2 t_L}{\pi}\\
&=
\frac {|Q|}{(1+\eps)\pi}\,\liminf_{L\to\infty}
\frac {\haus(\Si_L) }{\ell_L}.
\end{split}
\end{equation}
To estimate the last liminf, we get back to \eqref{deflL} and observe that
\begin{equation}
\label{asylL}
\ell_L\sim
\int_{\Sigma_L\cap Q} \sqrt{\langle M_Q\xi_{\Sigma_L}(x),\xi_{\Sigma_L}(x)\rangle}\,  d\haus(x)\quad
\text{as $L\to\infty$,}
\end{equation}
since the contribution of $\partial Q$ to the integral in \eqref{deflL} is fixed,
while that of $\Sigma_L\cap Q$ is dominant by \eqref{intersection}.
Now if $\eta\in C(\cOmega)$ is a cutoff function such that $0\leq \eta\leq 1$
and $\eta\equiv 1$ over $\overline{Q}$, letting
\begin{equation}
\label{defphi}
\widehat\varphi(x,y):=\eta(x)\sqrt{\langle M_Q y, y\rangle},\quad x\in\cOmega,\quad
y\in\sfera,
\end{equation}
we clearly have $\varphi\in\Csym$. Therefore, using \eqref{asylL} and
\eqref{varifold} we infer that
\[
\begin{split}
&\limsup_{L\to\infty}
\frac
{\ell_L} {\haus(\Si_L) }
=
\limsup_{L\to\infty}
\frac
1 {\haus(\Si_L) }
\int_{\Sigma_L\cap Q} \sqrt{\langle M_Q\xi_{\Sigma_L}(x),\xi_{\Sigma_L}(x)\rangle}\,  d\haus(x)
\\
\leq
&\lim_{L\to\infty}
\frac
1 {\haus(\Si_L) }
\int_{\Sigma_L} \widehat\varphi(x,\xi_{\Sigma_L}(x))\,  d\haus(x)
=
\int_{\Ots} \widehat\varphi(x,y)%(x,y)
\,d\theta.
\end{split}
\]
Now, recalling the properties of the cutoff function $\eta$ in \eqref{defphi},
it is possible to let $\eta(x)\downarrow \chi_{\overline Q}(x)$ pointwise in the last integral,
and obtain by dominated convergence
\[
\limsup_{L\to\infty}
\frac
{\ell_L} {\haus(\Si_L) }
\leq
\int_{\overline{Q}\times \sfera} \sqrt{\langle M_Q y, y\rangle}
\,d\theta
=
\int_{\overline{Q}}\left( \int_{\sfera} \sqrt{\langle M_Q y, y\rangle}\,d\nu_x(y)
\right)
\,d\mu(x),
\]
where $\theta=\mu\otimes \nu_x$ is the slicing of $\theta$ as in Definition~\ref{defFi}.
Passing to reciprocals in the previous inequality, we obtain a \emph{lower} bound
for the last liminf in \eqref{eq00} which, plugged into \eqref{eq00}, yields
\begin{equation}
\label{finales}
\lim_{L\to\infty}\frac{L}{\laa(\Oms_L)^{1/2}}
\geq
\frac {|Q|}{(1+\eps)\pi \int_{\overline{Q}}\left( \int_{\sfera} \sqrt{\langle M_Q y, y\rangle}\,d\nu_x(y)
\right)
\,d\mu(x)}.
\end{equation}
This lower bound holds for every square $Q$ such that $\overline{Q}\subset\Omega$, and having a
sufficiently small side length (depending only on $\eps$, as to guarantee the validity of \eqref{mmm}).
Moreover, the matrix $M_Q=A(x_0)$ where $x_0$ is the center of $Q$, so that $M_Q$ is independent of
the side length of $Q$. Thus, given $\eps>0$, we can first choose an arbitrary $x_0\in\Omega$, and
then rely on the previous inequality for every $Q$ (of sufficiently
small side length) centered at $x_0$: it is therefore possible, for fixed $x_0$, to let $Q$ shrink
around $x_0$. On the other hand, from Lebesgue Differentiation Theorem we have
\[
\lim_{Q\downarrow x_0} \frac {1}{|Q|}
\int_{\overline{Q}}\left( \int_{\sfera} \sqrt{\langle M_Q y, y\rangle}\,d\nu_x(y)
\right)
\,d\mu(x)=
f_\mu(x_0) \int_{\sfera} \sqrt{\langle A(x_0) y, y\rangle}\,d\nu_x(y)
%\quad
%\text{for a.e. $x_0\in\Omega$,}
\]
for a.e. $x_0\in\Omega$ (w.r.to the Lebesgue measure), where $f_\mu$ is as in Definition~\ref{defFi}.
Thus, to obtain \eqref{toprove}, we can first shrink $Q$ around its center $x_0$ in \eqref{finales},
then take the essential supremum over $x_0\in\Omega$ on the right, and finally use the arbitrariness of
$\eps$.
\end{proof}

\section{The $\Gamma$-limsup inequality}

In the following proposition we construct a fundamental pattern, whose
periodic homogenization inside $\Omega$ will be the main ingredient
in the construction of the recovery sequence, for the $\Gamma$-limsup inequality.

\begin{proposition}[tile construction for elementary varifolds]\label{proptile}
Let $Q\subset\R^2$ be an open
square with sides parallel to the coordinate axes, and
let $\nu$ be a probability measure over ${\mathbb S}^1$.
Moreover, let $M\in\R^{2\times 2}$ be a positive definite
matrix.
Then, for every length $\ell$ large enough, there exists a continuum $\Sigma_\ell\in
{\mathcal A}_\ell(Q)$ with the following properties.
\begin{itemize}
\item [(i)] Boundary-covering and length matching properties:
\begin{equation}
\label{bclm}
\partial Q\subset\Sigma_\ell,
\quad\text{and}\quad
 \lim_{\ell\to \infty}\frac{\haus(\Sigma_\ell)}{\ell}=1.
\end{equation}
\item [(ii)] Estimate on the anisotropic eigenvalue:
\begin{equation}\label{stimabase}
 \limsup_{\ell\to \infty} \dfrac
{\ell^2}{\lambda_1^M(Q\setminus\Sigma_\ell)}
\leq \frac{|Q|^2}{\big(\pi\int_{\mathbb{S}^1}
\sqrt{ \langle My,y\rangle}\, d\nu(y) \big)^2}.
\end{equation}
\item[(iii)] The varifolds associated with $\Sigma_\ell$ converge
to $|Q|^{-1}\chi_Q \otimes \nu$, that is,
for
every function $\varphi\in C(\overline{Q}\times {\mathbb S}^1)$ such that
$\varphi(x,y)=\varphi(x,-y)$, one has
\begin{equation}
\label{varifoldsconverge} \lim_{\ell\to\infty} \frac 1{\haus(\Sigma_\ell)}
\int_{\Sigma_\ell} \varphi(x,\xi_\ell(x))\,d\haus(x) = \frac
1 {|Q|}\int_Q\int_{{\mathbb
S}^1}\varphi(x,y)\,d\nu(y)\,dx
\end{equation}
where $\xi_\ell$ is any measurable selection of the unit normal to
$\Sigma_\ell$.
\end{itemize}
\end{proposition}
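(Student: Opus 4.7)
The plan is to first reduce to the case where $\nu$ is finitely supported, then build $\Sigma_\ell$ by a two-scale homogenization: a macroscopic tiling of $Q$ into equal squares whose \emph{types} encode the support of $\nu$, and a microscopic comb of parallel segments inside each tile. The formulas for the spacings and the type proportions will emerge from balancing the thin-strip estimate of Theorem~\ref{thin} against the prescribed total length $\ell$.

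For the reduction, any $\nu\in\Prob(\sfera)$ is a weak-* limit of antipodally symmetric atomic measures $\nu^{(n)}=\sum_i p_i^{(n)}\delta_{y_i^{(n)}}$; since $y\mapsto\sqrt{\langle My,y\rangle}$ is continuous, the right-hand side of \eqref{stimabase} is stable under this approximation, and a diagonal extraction in $(\ell,n)$ will deliver (i)--(iii) in the general case. It therefore suffices to handle $\nu=\sum_{i=1}^N p_i\delta_{y_i}$. The idea is to set
\[
h_i:=c\sqrt{\langle My_i,y_i\rangle},\qquad
c:=\frac{|Q|}{\ell\int_{\sfera}\sqrt{\langle My,y\rangle}\,d\nu(y)},\qquad
\alpha_i:=\frac{p_ih_i}{\sum_j p_jh_j},
\]
so that $\sum_i\alpha_i=1$ and $\sum_i\alpha_i/h_i=\ell/|Q|$; then pick an intermediate scale $\eps_\ell$ with $1/\ell\ll\eps_\ell\ll 1$ (say $\eps_\ell=\ell^{-1/2}$), tile $Q$ by squares of side $\eps_\ell$, and assign each tile a type $i\in\{1,\dots,N\}$ so that the type-$i$ tiles equidistribute in $Q$ with spatial density $\alpha_i$. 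Inside each type-$i$ tile, I place parallel segments with unit normal $y_i$, equispaced at distance $h_i$ and spanning the tile from side to side; finally, I let $\Sigma_\ell$ be the union of all these segments, of all tile boundaries, and of $\partial Q$.

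Assertion (i) is then immediate, since $\partial Q\subset\Sigma_\ell$ by construction and the total length of $\Sigma_\ell$ splits into a boundary contribution of order $|Q|/\eps_\ell=o(\ell)$ and a comb contribution asymptotic to $|Q|\sum_i\alpha_i/h_i=\ell$ (with a slight downward rescaling of the target length, if needed, to enforce $\haus(\Sigma_\ell)\leq\ell$). For (ii), since all tile boundaries lie in $\Sigma_\ell$, every connected component of $Q\setminus\Sigma_\ell$ sits inside a single tile and is contained there in a strip of width $h_i$ with unit normal $y_i$; Theorem~\ref{thin} then gives
\[
\lambda_1^M(Q\setminus\Sigma_\ell)\geq\pi^2\frac{\langle My_i,y_i\rangle}{h_i^2}=\frac{\pi^2}{c^2}=\frac{\pi^2\ell^2\bigl(\int_{\sfera}\sqrt{\langle My,y\rangle}\,d\nu\bigr)^2}{|Q|^2},
\]
which is exactly the bound in \eqref{stimabase}. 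For (iii), within any open $U\subset Q$, the mass of $\Sigma_\ell$ whose normal is $y_i$ is asymptotic to $|U|\alpha_i/h_i$, so its ratio to $\haus(\Sigma_\ell)\sim\ell$ tends to $p_i|U|/|Q|$; combined with the arbitrariness of $U$ and of the test function, this delivers the required convergence of varifolds to $|Q|^{-1}\chi_Q\otimes\nu$.

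The hard part will be the delicate balance between the two scales: $\eps_\ell$ must be large enough that tile boundaries contribute negligible length and that each tile hosts many comb teeth (so Theorem~\ref{thin} applies tile-by-tile), yet small enough that the tile-types equidistribute in $Q$ and reproduce the uniform first marginal $|Q|^{-1}\chi_Q$. The intermediate range $1/\ell\ll\eps_\ell\ll 1$ meets both requirements, but exhibiting an actual equidistributing rule for the tile-types — deterministic via a low-discrepancy sequence, or randomized via concentration of measure — is the only non-routine combinatorial ingredient of the argument.
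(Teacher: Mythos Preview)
Your proposal is correct: the choices of $h_i$, $\alpha_i$, and the intermediate scale $\eps_\ell$ all balance out as claimed, and the eigenvalue bound via Theorem~\ref{thin} and the varifold limit both compute to the right quantities. The route, however, differs from the paper's in an instructive way. You distribute the directions $y_i$ \emph{across} tiles: each small square of side $\eps_\ell$ carries a single direction, and you must then exhibit an assignment of types so that, in every open $U\subset Q$, the fraction of type-$i$ tiles tends to $\alpha_i$ --- the ``non-routine combinatorial ingredient'' you flag at the end (note this \emph{local} equidistribution is essential for (iii), since tiles of different types carry different length densities $1/h_i$, and only the averaging recovers the uniform first marginal). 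The paper instead mixes all the directions \emph{within} a single tile: the fundamental pattern is a unit square sliced into $N$ stacked rectangles of heights proportional to $p_i\sqrt{\langle My_i,y_i\rangle}$, the $i$-th rectangle being filled with segments orthogonal to $y_i$ at spacing $\eps\sqrt{\langle My_i,y_i\rangle}$; this one pattern is then repeated identically in an $m\times m$ checkerboard (with $m\sim\ell^{1/3}$, $\eps\sim\ell^{-2/3}$). Because every macroscopic tile is the same, the equidistribution issue evaporates and the varifold convergence reduces to a Riemann-sum argument. Your approach buys a cleaner local picture (one direction per cell), at the price of the extra combinatorial step; the paper trades a more intricate tile design for a trivial homogenization.
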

\begin{proof}
It is not restrictive to assume that $Q$
has a side length of $1$ (the general case, with a side length of $s$,
is recovered by a scaling argument, replacing $Q$ with $sQ$ and the constructed $\Sigma_\ell$
with the rescaled $s\Sigma_{\ell/s}$).

Moreover, we initially assume that
the measure $\nu$ is \emph{purely atomic}, that is
\begin{equation}
\label{purelyatomic}
\nu=\sum_{j=1}^n \beta_j \delta_{\xi_j},\quad \sum_{j=1}^n \beta_j=1\qquad\text{($n\geq 1$)}
\end{equation}
for suitable %$n\geq 1$,
weights $\beta_j>0$ and unit vectors $\xi_j\in {\mathbb S}^1$.
%(this restriction will be removed later).
With this notation,
we have
\begin{equation}
\label{defI}
I:=\int_{{\mathbb S}^1}\sqrt{\langle M\xi,\xi\rangle}\,d\nu(\xi)
=
\sum_{j=1}^n \beta_j
\sqrt{\langle M\xi_j,\xi_j\rangle}
\end{equation}
(the first equality is the definition of $I$)
and, more generally,
\begin{equation}
\label{moregen}
\int_{{\mathbb S}^1}v(\xi)\,d\nu(\xi)=
\sum_{j=1}^n \beta_j v(\xi_j)\quad
\forall v\in C({\mathbb S}^1).
\end{equation}
The sets $\Sigma_\ell$ will be obtained as the periodic homogenization,
inside the square $Q$, of suitably rescaled fundamental tiles $\Gamma_\eps$,
initially constructed inside a unit square.
Our construction consists of three main steps.

\medskip

\noindent\emph{Step 1: tile construction}. %(See Figure~\ref{fig1}).
We first slice the unit square $Y:=(0,1)\times (0,1)$ into $n$
stacked rectangles $Y_j$
($1\leq j\leq n$) of size $1\times h_j$,
the height $h_j$ being defined as
\begin{equation}
\label{defhj}
h_j:=\dfrac{\beta_j \sqrt{\langle M\xi_j,\xi_j\rangle}
}{I}.
\end{equation}
Note that, by \eqref{defI}, $\sum h_j=1$ so that the heights of
the $n$ rectangles match the height of $Y$.
%: the explicit form of the $h_j$s, however, will not be used immediately).
Then, by
drawing inside every $Y_j$
a maximal family
of parallel line segments, orthogonal to $\xi_j$ and equally spaced a
distance  of $\eps_j$ apart from one another ($\eps_j\ll 1$ to be chosen later), we
further slice every rectangle $Y_j$ into several thin polygons % $S_{j,k}$
of
width $\eps_j$. For fixed $j$,
the number of polygons inside $Y_j$ is $O(1/\eps_j)$:
every polygon
is a
trapezoid of height $\eps_j$, with the exception of \emph{at most four
polygons} that, due to
 the corners of $Y_j$,
may degenerate into a triangle
 or a pentagon (one hexagon may also occur,
 if the line segments are almost parallel
to a diagonal of $Y_j$ and  one polygon touches two opposite
corners of $Y_j$).

Let $K_j$ denote the union of all these line segments, orthogonal to $\xi_j$,
drawn inside $Y_j$. Since
 every polygon (with at most four exceptions)
is a trapezoid of height $\eps_j$, summing the areas of the polygons we see that
\[
\eps_j \haus(K_j)+O(\eps_j)=|Y_j|=h_j,
\]
where $O(\eps_j)$ is the correction due to the exceptional polygons, whose total
area is at most $4\sqrt{2}\eps_j$. Thus, for the total length of $K_j$ we
have the asymptotics
\begin{equation*}
%\label{asyKj}
\haus(K_j)\sim \frac{h_j}{\eps_j}\quad\text{as $\eps_j\to 0.$}
\end{equation*}
From now on, the values of the $\eps_j$s shall be fixed according to
\begin{equation}
\label{defepsj}
\eps_j:=\eps \sqrt{\langle M\xi_j,\xi_j\rangle},\quad
1\leq j\leq n
\end{equation}
(where $\eps\ll 1$ is a scale parameter to be tuned later)
so that,
using \eqref{defhj},
the previous asymptotics take the concrete form
\begin{equation}
\label{asyKj}
\haus(K_j)\sim \frac{\beta_j}{\eps I}\quad\text{as $\eps\to 0.$}
\end{equation}
Our fundamental object, the \emph{tile} $\Gamma_\eps$,
is defined as
\begin{equation}
\label{defGammaeps}
\Gamma_\eps:=R\cup S_\eps,\quad
R:=\bigcup_{j=1}^n \partial Y_{j},\quad
S_\eps:=\bigcup_{j=1}^n K_{j}.
\end{equation}
The set $R$ (which is \emph{independent} of $\eps$)
consists of the boundaries of the $n$ rectangles $Y_j$ and acts as a frame, while
$S_\eps$ is the union of all the oblique line segments inside the rectangles.
Clearly $\Gamma_\eps$ is compact and
connected, and moreover
\begin{equation}
\label{frame}
\partial Y\subset \Gamma_\eps
\end{equation}
since $\partial Y\subset R$. We also have
\begin{equation}
\label{LRS}
\haus(R)=n+3,\qquad
\haus(S_\eps)  \sim \frac 1{\eps I}\quad\text{as $\eps\to 0$}
\end{equation}
having used, in the last expansion, \eqref{asyKj}
and the fact that $\sum \beta_j=1$.

In view of \eqref{varifoldsconverge}, let
$\xi_\eps(x)$ denote the unit normal to the set $S_\eps$
at $x\in S_\eps$ (as usual, any measurable selection of $\pm \xi_\eps(x)$ will do).
By our construction,  $\xi_\eps(x)=\pm \xi_j$ for every $x\in K_j$; therefore,
if $v\in C({\mathbb S}^1)$ is such that $v(y)=v(-y)$, we have
\[
\frac {1}{\haus(S_\eps)}
\int_{S_\eps} v(\xi_\eps(t))\,d\haus(t)
=
\sum_{j=1}^n
\frac {\haus(K_j)}{\haus(S_\eps)}
v(\xi_j).
\]
Therefore, using \eqref{asyKj} and \eqref{LRS} we have
\begin{equation}
\label{limIv}
\lim_{\eps\to 0}
\frac {1}{\haus(S_\eps)}
\int_{S_\eps} v(\xi_\eps(t))\,d\haus(t)
=
\sum_{j=1}^n
\beta_j v(\xi_j)=\int_{{\mathbb S}^1}v(\xi)\,d\nu(\xi)
\end{equation}
by \eqref{moregen}.
Finally, since every connected component of
$Y\setminus \Gamma_\eps$ is, by construction, a polygon of width $\eps_j$
in the direction $\xi_j$ for some $j\in\{1,\ldots,n\}$, Theorem~\ref{thin}
 gives
\begin{equation}
\label{ss6}
\lambda_1^M(Y\setminus\Gamma_\eps)\geq \min_{j}
\pi^2 \frac {\langle M \xi_j,\xi_j\rangle}{\eps_j^2}=
\frac {\pi^2}{\eps^2}
\end{equation}
having used \eqref{defepsj} in the last passage.

\medskip

\begin{figure}
\centering

\subfigure[Step1: tile construction with $n=3$.]
{
\begin{tikzpicture}[scale=0.5]

\rectangle
\draw[->,thick,red] (6.6,7.9)--(5.6,8.5) node[midway, below] {$\xi_1$} ; %VETTORI
\draw[->,thick,red] (5,3.9)--(5.5,5) node[midway,right] {$\xi_2$};
\draw[->,thick,red] (4.8,0.7) --(5.7,1.5) node[midway, below] {$\xi_3$} ;
\node at (-1,8) {$Y_1$} ;
\node at (-1,4.5) {$Y_2$} ;
\node at (-1,1) {$Y_3$} ;
\end{tikzpicture}
}
\qquad
\subfigure[Step2: periodic homogenization.]{
\begin{tikzpicture}[scale=0.08]

\draw[very thick, rounded corners] (17,8) .. controls (10,9) and (0,20) .. (8,34) node [left] {$\Omega$}  ;
\draw[very thick, rounded corners] (8,34) .. controls (30,75) and (63,45) .. (52,35);
\draw[very thick, rounded corners] (52,35) .. controls (32,37) and (35,19) .. (43,18);
\draw[very thick, rounded corners] (43,18) .. controls +(-2,-3) and +(2,5) .. (17,8);

%\rectangle

\begin{scope}[shift={(10,0)}]
\rectangle
\end{scope}

\begin{scope}[shift={(10,0)}]
\rectangle
\end{scope}

%\begin{scope}[shift={(20,0)}]
%\rectangle
%\end{scope}

%\begin{scope}[shift={(30,0)}]
%\rectangle
%\end{scope}

%\begin{scope}[shift={(40,0)}]
%\rectangle
%\end{scope}

%\begin{scope}[shift={(50,0)}]
%\rectangle
%\end{scope}

\begin{scope}[shift={(0,10)}]
\rectangle
\end{scope}

\begin{scope}[shift={(10,10)}]
\rectangle
\end{scope}

\begin{scope}[shift={(20,10)}]
\rectangle
\end{scope}

\begin{scope}[shift={(30,10)}]
\rectangle
\end{scope}

\begin{scope}[shift={(40,10)}]
\rectangle
\end{scope}

%\begin{scope}[shift={(50,10)}]
%\rectangle
%\end{scope}

\begin{scope}[shift={(0,20)}]
\rectangle
\end{scope}

\begin{scope}[shift={(10,20)}]
\rectangle
\end{scope}

\begin{scope}[shift={(20,20)}]
\rectangle
\end{scope}

\begin{scope}[shift={(30,20)}]
\rectangle
\end{scope}

%\begin{scope}[shift={(40,20)}]
%\rectangle
%\end{scope}

%\begin{scope}[shift={(50,20)}]
%\rectangle
%\end{scope}

\begin{scope}[shift={(0,30)}]
\rectangle
\end{scope}

\begin{scope}[shift={(10,30)}]
\rectangle
\end{scope}

\begin{scope}[shift={(20,30)}]
\rectangle
\end{scope}

\begin{scope}[shift={(30,30)}]
\rectangle
\end{scope}

\begin{scope}[shift={(40,30)}]
\rectangle
\end{scope}

\begin{scope}[shift={(50,30)}]
\rectangle
\end{scope}

%\begin{scope}[shift={(0,40)}]
%\rectangle
%\end{scope}

\begin{scope}[shift={(10,40)}]
\rectangle
\end{scope}

\begin{scope}[shift={(20,40)}]
\rectangle
\end{scope}

\begin{scope}[shift={(30,40)}]
\rectangle
\end{scope}

\begin{scope}[shift={(40,40)}]
\rectangle
\end{scope}

\begin{scope}[shift={(50,40)}]
\rectangle
\end{scope}

%\begin{scope}[shift={(0,50)}]
%\rectangle
%\end{scope}

%\begin{scope}[shift={(10,50)}]
%\rectangle
%\end{scope}

\begin{scope}[shift={(20,50)}]
\rectangle
\end{scope}

\begin{scope}[shift={(30,50)}]
\rectangle
\end{scope}

\begin{scope}[shift={(40,50)}]
\rectangle
\end{scope}

%\begin{scope}[shift={(50,50)}]
%\rectangle
%\end{scope}

\end{tikzpicture}
}
\end{figure}

\noindent\emph{Step 2 (periodic homogenization)}. Since the square $Q$
has a side length of $1$,  given an integer $m\geq 1$ we may fit $m^2$
copies of the rescaled tile $m^{-1}\Gamma_\eps$ inside $\overline Q$
as in an $m\times m$ checkerboard:
the resulting
tiling is then $1/m$--periodic in the two directions parallel to the sides of $Q$.

We denote by $\Gamma_{m,\eps}$
the union of these $m^2$ rescaled tiles: this set is connected, because so is $\Gamma_\eps$
and,  by \eqref{frame}, each tile shares
a side of length $1/m$ with each neighbor.
The sets $\Sigma_\ell$ we want to construct are defined,
for large $\ell$, as follows:
\begin{equation*}
%\label{defSigmaL}
\Sigma_\ell:=\Gamma_{m,\eps}\qquad\text{$m=m(\ell)$ and $\eps=\eps(\ell)$,}
\end{equation*}
where
\begin{equation}
\label{defepsm}
\eps(\ell):=\ell^{-2/3},
\qquad
m(\ell)=\left\lceil \ell^{\frac 1 3}I \right\rceil
\end{equation}
(observe that
$\eps\to 0$ and $m\to\infty$
when $\ell\to\infty$).

According to \eqref{defGammaeps},
$\Sigma_\ell$ is the union
of $m^2$ disjoint copies of the rescaled set $m^{-1}S_\eps$ (hereafter denoted by
$S_{m,\eps}^{i}$, $1\leq i\leq m^2$) of total length
$\sim m/(\eps I)$ according to \eqref{LRS},
plus the union of $m^2$
copies of the rescaled frame $m^{-1}R$: these are not disjoint, since
adjacent tiles share a side, but their total length does not exceed $m(n+3)$
according to \eqref{LRS}, and is therefore negligible compared
to the total length of the $S_{m,\eps}^i$s. As a consequence,
\begin{equation}
\label{estLL}
\haus(\Sigma_\ell)\sim \haus\left(\bigcup\nolimits_{i=1}^{m^2} S_{m,\eps}^i\right)
\sim \frac{m}{\eps I}\quad\text{as $\ell\to\infty$,}
\end{equation}
and \eqref{bclm} follows from \eqref{defepsm} (the first part of \eqref{bclm} is
immediate, hence claim (i) is proved).

Similarly, the limit in \eqref{varifoldsconverge} is unchanged
if we restrict integration to the union of the
$S_{m,\eps}^i$s,
and therefore
\eqref{varifoldsconverge} is equivalent to
\begin{equation*}
%\label{varifoldsconverge2}
\lim_{\ell\to\infty}
\sum_{i=1}^{m^2}
\frac 1 {m^2 \haus(S^i_{m,\eps})}
\int_{S_{m,\eps}^i} \varphi(x,\xi_\ell(x))\,d\haus(x) = \frac
1 {|Q|}\int_Q\int_{{\mathbb
S}^1}\varphi(x,y)\,d\nu(y)\,dx,
\end{equation*}
having used the
first equivalence in \eqref{estLL} and the fact that the
$S_{m,\eps}^i$s are disjoint and congruent.
In addition,
by a density argument,
it suffices to prove \eqref{varifoldsconverge} when
$\varphi(x,y)=u(x)v(y)$ with $u\in C(\overline{Q})$ and $v\in C({\mathbb S}^1)$
such that $v(y)=v(-y)$, so that
the last equation reduces to
\begin{equation}
\label{varifoldsconverge3} \lim_{\ell\to\infty}
\sum_{i=1}^{m^2}
\frac 1 {m^2 \haus(S^i_{m,\eps})}
\int_{S_{m,\eps}^i} u(x)v(\xi_\ell(x))\,d\haus(x) =
I_u I_v,
\end{equation}
where
\[
I_u=
\int_Q u(x) \,dx,\qquad
I_v=\int_{{\mathbb
S}^1}
v(y)\,d\nu(y)
\]
(as $1/|Q|=1$, this factor  can be omitted).
Now fix functions $u,v$ as above, and let
\begin{equation}
\label{defUmi}
U_m^i :=\frac
1 {|Q^i_m|}\int_{Q^i_m} u(x) \,dx =
m^2 \int_{Q^i_m} u(x) \,dx
\end{equation}
denote the integral average of $u$ over the square $Q^i_m$, of side-length $1/m$,
whose boundary frames $S_{m,\eps}^i$. If $\omega(\delta)=\sup_{|z-x|\leq \delta} |u(z)-u(x)|$
is the modulus of continuity of $u$ over $\overline{Q}$,
since $S_{m,\eps}^i\subset \overline{Q^i_m}$ and $\mathop{\rm diam}(Q^i_m)=\sqrt{2}/m$
we have
\[
\frac 1 {\haus(S_{m,\eps}^i)}
\int_{S_{m,\eps}^i}
\left\vert u(x)-U^i_m
\right\vert
\,d\haus(x)
\leq \omega(\sqrt{2}/m),\quad
1\leq i\leq m^2.
\]
From these estimates, we immediately see that
\[
\left\vert
\sum_{i=1}^{m^2}
\frac 1 {m^2 \haus(S^i_{m,\eps})}
\int_{S_{m,\eps}^i} \bigl( u(x)-U^i_m\bigr) v(\xi_\ell(x))\,d\haus(x)
\right\vert
\leq \omega(\sqrt{2}/m)\Vert v\Vert_{L^\infty},
\]
which tends to zero as $\ell$ (hence $m$) tends to infinity.
Thus, to prove \eqref{varifoldsconverge3}, we can freeze $u(x)$
in each integral, and replace it with
the constant $U^i_m$. On the other hand,
since each $S_{m,\eps}^i$ is a rescaled copy of the set $S_\eps$,
\[
\frac {1} {\haus(S^i_{m,\eps})}
\int_{S_{m,\eps}^i} v(\xi_\ell(x))\,d\haus(x)
=
\frac {1} {\haus(S_{\eps})}
\int_{S_\eps} v(\xi_\eps(x))\,d\haus(x),\quad
1\leq i\leq m^2,
\]
where $\xi_\eps$, as before, is the unit normal to $S_\eps$.
Therefore, if we replace $u(x)$ with $U_m^i$ in \eqref{varifoldsconverge3}
and take it out of the integral,
\eqref{varifoldsconverge3} simplifies to
\[
\lim_{\ell\to\infty}
\left(
\sum_{i=1}^{m^2}
\frac {1} {m^2} U^i_m
\right)
\left(\frac {1} {\haus(S_{\eps})}
\int_{S_\eps} v(\xi_\eps(x))\,d\haus(x)\right)=I_uI_v
\]
which is now obvious, since the sum coincides with $I_u$
by \eqref{defUmi}, while the second factor tends to $I_v$ by \eqref{limIv}.
Summing up, \eqref{varifoldsconverge} is proved.

Finally, to prove \eqref{stimabase}, observe that $Q\setminus\Sigma_\ell$
is highly disconnected due to the checkerboard
structure of $\Sigma_\ell$, and each of its connected component
is, by construction, congruent to a connected component of $Y\setminus\Gamma_\eps$ scaled
down by a factor $1/m$ (which amplifies the first eigenvalue by
a factor $m^2$). Therefore, we have using \eqref{ss6} and \eqref{defepsm}
\[
\lambda_1^M(Q\setminus \Sigma_\ell)=m^2
\lambda_1^M(Y\setminus \Gamma_\eps)\geq
\frac{\pi^2m^2}{\eps^2}\sim \pi^2 I^2 \ell^2
\quad\text{as $\ell\to\infty$,}
\]
so that \eqref{stimabase} follows immediately from \eqref{defI}.

\medskip

\noindent\emph{Step 3 (general $\nu$)}.
By a diagonal argument, we can now remove the restriction \eqref{purelyatomic}
and consider an arbitrary probability measure
$\nu$ over ${\mathbb S}^1$.

Let $\{\nu_k\}$ be a sequence of atomic probability
measures over ${\mathbb S}^1$, such that $\nu_k\rightharpoonup \nu$.
For each $k$, we can apply Proposition~\ref{proptile} in the
form just proved (with $\nu_k$
in place of $\nu$),
thus obtaining
continua
$\Sigma_\ell^k\in\mathcal{A}_{\ell}(Q)$ satisfying claims (i)--(iii) relative
to $\nu_k$.
Then, since the weak-* topology of
probability
measures over ${\mathbb S}^1$ is metrizable,
by a standard diagonal argument one can find an increasing sequence
of lengths $\ell_1<\ell_2<\cdots $ such that, letting $\Sigma_\ell:=\Sigma_\ell^k$
whenever $\ell_k\leq \ell< \ell_{k+1}$, the resulting continua $\{\Sigma_\ell\}$
satisfy claims (i)--(iii) relative to $\nu$.
\end{proof}

In view of extending Proposition~\ref{proptile} to a more
general class of varifolds, the following terminology is needed.

\begin{definition}\label{defstep}
For $s>0$, let $\chb$ denote the collection of all those open squares $Q_i\subset\R^2$, with side-length $s$
and corners
on the lattice $(s{\mathbb Z})^2$, such that $Q_i\cap \Omega\not=\emptyset$.
We say that a varifold  $\theta\in\prob$ is \emph{fitted to $\chb$} if
%$\mu$ is absolutely continuous, with a density $f(x)>0$ which is constant on each set of the form $Q_i\cap \Omega$ with $Q_i\in\chb$ and $\nu_x$ is an atomic probability measure on $\mathbb S_1$ concentrated at a finite number of points, which is even and does not depend on $x$ when $x$ ranges in $Q_i\cap \Omega$, with $Q_i\in\chb$.
it can be represented as
\begin{equation}\label{mufit}
\theta=\sum_{Q_i\in \chb} \alpha_i \chi_{\Omega\cap Q_i} \otimes \nu_i
%alpha_i \chi_{\Omega_i}(x)dx\otimes \sum_{j=1}^{2n_i} \gammaloc d\delta_{\xiloc}\big)\quad \Omega_i=Q_i\cap \Om,
\end{equation}
for suitable constants $\alpha_i\geq 0$ and probability measures
$\nu_i$ over ${\mathbb S^1}$, satisfying
\begin{equation}\label{constants}
\sum_{Q_i\in\chb} \alpha_i|\Omega\cap Q_i| = 1.
\end{equation}
\end{definition}
Roughly speaking, a varifold is \emph{fitted to $\chb$} if its restriction to a cylinder of
the form $\overline{\Omega\cap Q_i}\times {\mathbb S^1}$ is the \emph{product measure} $\alpha_i\otimes \nu_i$.
In particular, for such a varifold its first marginal (i.e. its projection over $\overline{\Omega}$)
is absolutely continuous with respect to the Lebesgue measure, and is \emph{piecewise constant} over
$\Omega$: as a consequence, \eqref{constants} is equivalent to the request that the varifold has unit mass.

We observe, for future reference, that if $\theta$ is as in \eqref{mufit} the functional
$F_\infty$ defined in \eqref{gammalimit} takes the form
\begin{equation}
\label{Finffitted}
F_\infty(\theta)=
\max_{Q_i\in\chb}\sup_{x\in \Omega\cap Q_i}
\frac{1}{\alpha_i^2\big(\pi\int_{\mathbb{S}^1}
\sqrt{ \langle A(x)y,y\rangle}\, d\nu_i(y) \big)^2}.
\end{equation}

\begin{proposition}\label{proptile2}
Let $\theta\in\prob$ be a varifold \emph{fitted to $\chb$}.
%of the form \eqref{mufit}.
Then there exists a sequence of continua
$\Sigma_L\subset\overline{\Omega}$ such that \eqref{varifold} holds true,
\begin{equation}
\label{asL}
\lim_{L\to\infty}
\frac{\haus(\Sigma_L)}{L}=1,
\end{equation}
and
\begin{equation}\label{gammasup}
 \limsup_{L\to \infty}
\frac{L^2}
{\lambda_1^A(\Omega\setminus\Sigma_L)}\leq
% F_L(\nu_L)\leq \frac{1}{\pi^2}\esssup_{x\in\Omega}\frac{1}{\big(\int_{\mathbb{S}^1} |\sqrt{A(x)}y| d\nu_x(y) f(x)\big)^2}.
F_\infty(\theta).
\end{equation}
\end{proposition}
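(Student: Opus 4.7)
\emph{Plan of proof.} The plan is to place, inside each square $Q_i\in\chb$ of the grid on which $\theta$ is fitted, a suitably parameterized copy of the fundamental tile from Proposition~\ref{proptile}, then glue these local pieces into a single continuum and estimate the resulting eigenvalue square by square. For each $Q_i$ with $\alpha_i>0$ I would choose a reference point $x_i^*\in\overline{Q_i\cap\Omega}$ realizing the supremum in \eqref{Finffitted} (equivalently, minimizing $x\mapsto\int_\sfera\sqrt{\langle A(x)y,y\rangle}\,d\nu_i(y)$), set $M_i:=A(x_i^*)$, and apply Proposition~\ref{proptile} with $Q=Q_i$, $\nu=\nu_i$, $M=M_i$, and $\ell=\alpha_i|Q_i|L$; this yields $\Sigma_L^i\subset\overline{Q_i}$ of density $\alpha_iL$ inside $Q_i$. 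For squares with $\alpha_i=0$ I would set $\Sigma_L^i:=\partial Q_i$, and define $\Sigma_L:=\partial\Omega\cup\bigcup_i(\Sigma_L^i\cap\overline\Omega)$. The boundary-covering property \eqref{bclm} ensures that adjacent tiles meet along full shared edges, and the inclusion of $\partial\Omega$ absorbs any truncation at $\partial\Omega$, so $\Sigma_L$ is indeed a continuum in $\overline\Omega$. The length asymptotic \eqref{asL} follows since $\haus(\Sigma_L^i\cap\overline\Omega)\sim\alpha_i|Q_i\cap\Omega|L$ (density $\alpha_iL$ over the area $|Q_i\cap\Omega|$) and $\sum_i\alpha_i|Q_i\cap\Omega|=1$ by \eqref{constants}, while the varifold convergence \eqref{varifold} follows by linearity from Proposition~\ref{proptile}(iii) applied in each square.

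For the eigenvalue estimate \eqref{gammasup}, the key structural fact is that $\bigcup_i(\partial Q_i\cap\overline\Omega)\subset\Sigma_L$, so $\Omega\setminus\Sigma_L$ decomposes into the disjoint pieces $E_i:=(Q_i\cap\Omega)\setminus\Sigma_L^i$ and $\lambda_1^A(\Omega\setminus\Sigma_L)=\min_i\lambda_1^A(E_i)$. By domain monotonicity $\lambda_1^A(E_i)\geq\lambda_1^A(Q_i\setminus\Sigma_L^i)$, and upon comparing the Rayleigh quotients of $A$ and of the constant matrix $M_i$ one reduces to Proposition~\ref{proptile}(ii), which delivers
\[
\limsup_{L\to\infty}\frac{L^2}{\lambda_1^A(E_i)}\leq \frac{1}{\alpha_i^2\bigl(\pi\int_\sfera\sqrt{\langle M_iy,y\rangle}\,d\nu_i(y)\bigr)^2};
\]
here the factor $\alpha_i^{-2}$ emerges from $L^2/\ell_i^2=(\alpha_i|Q_i|)^{-2}$ combined with the $|Q_i|^2$ on the right-hand side of \eqref{stimabase}. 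By the choice of $x_i^*$ this right-hand side equals the contribution of $Q_i$ to \eqref{Finffitted}, so taking the maximum over $i$ recovers $F_\infty(\theta)$.

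The main obstacle is that the comparison between $\lambda_1^A$ and $\lambda_1^{M_i}$ on $Q_i\setminus\Sigma_L^i$ naively loses a multiplicative factor $(1-C\omega_A(\mathop{\rm diam} Q_i))^{-1}$ arising from the oscillation of $A$ across $Q_i$, which does not tend to $1$ at fixed grid scale $s$. To recover the sharp bound $F_\infty(\theta)$ rather than only $F_\infty(\theta)\cdot(1+O(\omega_A(s)))$, I would employ a two-scale refinement: subdivide each $Q_i$ into $K(L)^2$ sub-squares of side $s/K(L)$ with $K(L)\to\infty$ slowly as $L\to\infty$, and apply Proposition~\ref{proptile} independently on each sub-square with its own local constant matrix $A(x_i^{k,*})$ and length proportional to the sub-square area. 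The fundamental thin polygons produced by Step~1 of Proposition~\ref{proptile} then have diameter $O(s/(K(L)m(L)))\to 0$, so the oscillation of $A$ relevant to Theorem~\ref{thin} becomes negligible and each sub-square eigenvalue bound matches the constant-matrix one up to a $1+o(1)$ factor. Taking the maximum over all sub-squares of $\alpha_i^{-2}(\pi\int\sqrt{\langle A(x_i^{k,*})y,y\rangle}\,d\nu_i)^{-2}$ converges, by uniform continuity of $A$ and compactness of $\overline{Q_i\cap\Omega}$, to $\sup_{x\in\Omega\cap Q_i}\alpha_i^{-2}(\pi\int\sqrt{\langle A(x)y,y\rangle}\,d\nu_i)^{-2}$, matching \eqref{Finffitted} exactly. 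Boundary squares need only minor additional care, exploiting the Lipschitz regularity of $\partial\Omega$ to preserve connectivity and length accounting after restriction to $\overline{\Omega}$.
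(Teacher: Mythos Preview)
Your overall architecture coincides with the paper's: assemble $\Sigma_L$ as a patchwork of tiles from Proposition~\ref{proptile}, one per square $Q_i$ with prescribed length $\alpha_i|Q_i|L$, glued through $\partial\Omega$ and the shared edges $\partial Q_i$; then decompose $\lambda_1^A(\Omega\setminus\Sigma_L)=\min_i\lambda_1^A(E_i)$ and bound each piece via Proposition~\ref{proptile}(ii). The length asymptotics via localized (iii), the varifold convergence by linearity, and the trivialization of the case $\alpha_i=0$ all match.

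The one substantive difference is how you absorb the oscillation of $A$ across each $Q_i$. You propose a dynamic two-scale refinement $K(L)\to\infty$, applying Proposition~\ref{proptile} on sub-squares of side $s/K(L)$ with local matrices $A(x_i^{k,*})$. This can be made rigorous, but since the sub-square parameters vary with $L$ you cannot invoke (i)--(iii) of Proposition~\ref{proptile} as black-box limits; you must reopen its proof and extract uniform-in-parameter rates (as you implicitly acknowledge by invoking Step~1 and Theorem~\ref{thin}). The paper uses a simpler device, exploiting a property of fitted varifolds that you overlook: a varifold fitted to $\chb$ at scale $s$ is automatically fitted at scale $s/2^n$ for every $n$ (same $\alpha_i$, $\nu_i$ replicated on dyadic sub-squares). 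Hence for each fixed $\eta>1$ one may first replace $s$ by $s/2^n$, once, so small that $\langle A(x)y,y\rangle\geq\eta^{-1}\langle M_iy,y\rangle$ holds across every (new) $Q_i$; then apply Proposition~\ref{proptile} at this \emph{fixed} grid as $L\to\infty$, obtaining $\limsup L^2/\lambda_1^A\leq\eta F_\infty(\theta)$; finally remove $\eta$ by a diagonal argument. All parameters in Proposition~\ref{proptile} stay frozen during the limit, so no uniformity issue arises, and no optimization over $x_i^*$ is needed.

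One minor technical slip: your monotonicity step $\lambda_1^A(E_i)\geq\lambda_1^A(Q_i\setminus\Sigma_L^i)$ is ill-posed when $Q_i$ crosses $\partial\Omega$, since $A(x)$ is only defined on $\overline\Omega$. The paper first passes from $A$ to the constant matrix $M_i$ (defined everywhere) and only \emph{then} enlarges the domain to $Q_i$; the order of these two steps matters.
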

\begin{proof}
We keep for $\theta$ the same notation as in Definition~\ref{defstep}
(in particular \eqref{mufit} and \eqref{constants}),
%consider a varifold
%\begin{equation}\label{mufit2}
%\nu=\sum_{Q_i\in \chb} \alpha_i \chi_{\Omega\cap Q_i} \otimes \nu_i
%\end{equation}
%fitted to $\chb$
and fix a number
$\eta>1$.
By replacing $s$
with $s/2^n$ for some large $n\geq 1$ and
relabelling the $\alpha_i$s
(thus keeping $\nu$ fitted to $\chb$), we may assume that $s$ is so small that
the following two conditions hold:
\begin{itemize}
\item[(C1)] no connected component of $\partial\Omega$ is strictly contained in any square $Q_i\in\chb$;
\item[(C2)] for every square $Q_i\in\chb$, there exists a positive definite matrix $M_i$ such that
\begin{equation}\label{assu2}
\langle A(x) y,y\rangle\geq\frac{1}{\eta} \langle M_iy,y\rangle \qquad \forall x\in \Omega\cap Q_i,
\quad\forall y\in\mathbb S_1.
\end{equation}
\end{itemize}
Condition C1 holds, for $s$ small enough, since $\partial \Omega$, being Lipschitz,
has finitely many connected components whose diameters have a positive lower bound.
On the other hand, C2 is guaranteed, for small $s$, by the uniform continuity of the function $A(x)$
(one can set e.g.  $M_i=A(x_i)$, for some $x_i\in \Omega\cap Q_i$).

The sets $\Sigma_L$ we want to construct will be obtained as a patchwork of sets $\Sigma^i_\ell$,
one for each square $Q_i\in\chb$, obtained from Proposition~\ref{proptile}. More precisely,
for every square $Q_i\in\chb$ we apply Proposition~\ref{proptile} when the square $Q=Q_i$,
the measure $\nu=\nu_i$ and the matrix $M=M_i$. This yields, for
large enough $\ell$, sets $\Sigma^i_\ell$ such that:
\begin{itemize}
\item[(i)] $\partial Q_i\subset\Sigma^i_\ell\quad$ and $\quad\haus(\Sigma^i_\ell)\sim\ell$ as $\ell\to\infty$;
\item[(ii)]  $\displaystyle \limsup_{\ell\to \infty} \dfrac
{\ell^2}{\lambda_1^{M_i}(Q_i\setminus\Sigma^i_\ell)}
\leq \frac{s^4}{\big(\pi\int_{\mathbb{S}^1}
\sqrt{ \langle M_iy,y\rangle}\, d\nu_i(y) \big)^2};$
\item[(iii)] for
every $\varphi\in C(\overline{Q_i}\times {\mathbb S}^1)$ with
$\varphi(x,y)=\varphi(x,-y)$,
\begin{equation}
\label{varifoldsconverge4} \lim_{\ell\to\infty} \frac 1{\haus(\Sigma^i_\ell)}
\int_{\Sigma^i_\ell} \varphi(x,\xi^i_\ell(x))\,d\haus(x) = \frac
1 {|Q_i|}\int_{Q_i}\int_{{\mathbb
S}^1}\varphi(x,y)\,d\nu_i(y)\,dx
\end{equation}
where $\xi^i_\ell$ is any measurable selection of the unit normal to
$\Sigma^i_\ell$.
\end{itemize}
Observe that the domain $\Omega$ plays no role in this construction, and this is natural
for those squares $Q_i\in\chb$ such that $Q_i\subset\Omega$. If, however, $Q_i\in\chb$
is such that $Q_i\cap\partial\Omega\not=\emptyset$, the weak-* convergence in (iii)
(that occurs in the whole $\overline{Q_i}\times {\mathbb S}^1$)
can still
be \emph{localized} to $\overline{\Omega\cap Q_i}\times {\mathbb S}^1$. More precisely,
since $\partial(\Omega\cap Q_i)$ has null Lebesgue measure, and
the limit measure $|Q_i|^{-1}\chi_{Q_i}\otimes \nu_i$ does not
charge the cylinder $\partial(\Omega\cap Q_i)\times {\mathbb S}^1$, from \eqref{varifoldsconverge4}
we infer that
\begin{equation}
\label{varifoldsconverge5} \lim_{\ell\to\infty} \frac 1{\haus(\Sigma^i_\ell)}
\int_{\overline{\Omega}\cap \Sigma^i_\ell} \varphi(x,\xi^i_\ell(x))\,d\haus(x) = \frac
1 {|Q_i|}\int_{\Omega\cap Q_i}\int_{{\mathbb
S}^1}\varphi(x,y)\,d\nu_i(y)\,dx,
\end{equation}
for
every $\varphi\in C(\overline{\Omega\cap Q_i}\times {\mathbb S}^1)$ with
$\varphi(x,y)=\varphi(x,-y)$ (in the first integral, a localization to $\overline{Q_i}$ is implicit
since $\Sigma^i_\ell\subset \overline{Q_i}$ by assumption). In particular, testing with $\varphi\equiv 1$, we
find
\begin{equation*}
%\label{lungpar}
\lim_{\ell\to\infty} \frac {\haus(\overline{\Omega}\cap\Sigma^i_\ell)}{\haus(\Sigma^i_\ell)}=
\frac{|\Omega\cap Q_i|}{|Q_i|}
\end{equation*}
i.e., using the second property in (i) above,
\begin{equation}
\label{lungpar2}
\lim_{\ell\to\infty} \frac {\haus(\overline{\Omega}\cap\Sigma^i_\ell)}{\ell}=
\frac{|\Omega\cap Q_i|}{|Q_i|}
\end{equation}
(when $Q_i\subset\Omega$ this is obvious since, in this case, it is contained in (i) above).

In \eqref{mufit} we may assume that $\alpha_i>0$ for all $i$, since if $\alpha_i=0$ for some
$i$ then the right-hand side of \eqref{gammasup} is $+\infty$. Then, for large $L$
we can define the sets
\begin{equation}
\label{defSL}
\Sigma_L:=\left(\partial\Omega\right)\cup \bigl(\bigcup_{Q_i\in \chb} \Sigma^i_{s^2\alpha_i L}\cap \Omega\bigr),
\end{equation}
where $\Sigma^i_{s^2\alpha_iL}$ denotes the set $\Sigma^i_{\ell}$ defined above, when $\ell=s^2\alpha_i L$
(recall that $s^2=|Q_i|$, the area of $Q_i$).

Some remarks are in order. First, since each set $\Sigma^i_{s^2\alpha_iL}$
is connected and covers $\partial Q_i$,
building on (C1) above one can check that $\Sigma_L$ is connected
(to this purpose, the fact that $\Sigma_L\supset\partial\Omega$
is essential). Then, setting $\ell=s^2\alpha_i L$ in \eqref{lungpar2} we obtain
\begin{equation}
\label{lungpar3}
\lim_{L\to\infty} \frac {\haus(\overline{\Omega}\cap\Sigma^i_{s^2\alpha_i L})}{L}=
\frac{s^2\alpha_i |\Omega\cap Q_i|}{|Q_i|}
=
\alpha_i |\Omega\cap Q_i|
\quad\forall Q_i\in\chb.
\end{equation}
Now, observing that $\haus(\partial\Omega)$ is finite, and that the sets
$\Sigma^i_{s^2\alpha_i L}$ are pairwise disjoint, except for their overlapping along
$\bigcup \partial Q_i$ whose total length is independent of $L$,
from \eqref{defSL} and \eqref{lungpar3} we obtain
\begin{equation}
\label{lungtot}
\lim_{L\to\infty} \frac {\haus(\Sigma_L)}{L}=
\sum_{Q_i\in\chb}
\lim_{L\to\infty}
\frac {\haus(\overline{\Omega}\cap\Sigma^i_{s^2\alpha_i L})}{L}
=
\sum_{Q_i\in\chb}
\alpha_i |\Omega\cap Q_i|=1
\end{equation}
according to \eqref{constants}. Along the same lines, given $\varphi\in C(\overline{\Omega}\times
{\mathbb S}^1)$ with $\varphi(x,y)=\varphi(x,-y)$,
denoting by $\xi_L$ the unit normal to $\Sigma_L$, we may split
\begin{equation}
\begin{split}
\label{varifoldsconverge6}
&\lim_{L \to\infty} \frac 1{\haus(\Sigma_L)}
\int_{\Sigma_L} \varphi(x,\xi_L(x))\,d\haus(x) \\=
\sum_{Q_i\in \chb}
&\lim_{L \to\infty}
\frac {\haus(\Sigma^i_{s^2\alpha_i L})}{\haus(\Sigma_L)}\cdot
\frac 1{\haus(\Sigma^i_{s^2\alpha_i L})}
\int_{\overline{\Omega}\cap \Sigma^i_{s^2\alpha_i L}} \varphi(x,\xi^i_\ell(x))\,d\haus(x).
\end{split}
\end{equation}
Now for every $Q_i\in\chb$, using  \eqref{lungtot} and condition (i) above with $\ell=s^2\alpha_i L$,
we have
\[
\lim_{L \to\infty}
\frac {\haus(\Sigma^i_{s^2\alpha_i L})}{\haus(\Sigma_L)}
=s^2\alpha_i,
\]
while
putting $\ell=s^2\alpha_i L$ in \eqref{varifoldsconverge5} gives
\begin{equation*}
\lim_{L \to\infty} \frac 1{\haus(\Sigma^i_{s^2\alpha_i L})}
\int_{\overline{\Omega}\cap \Sigma^i_{s^2\alpha_i L}} \varphi(x,\xi^i_\ell(x))\,d\haus(x) =
\frac 1{|Q_i|} \int_{\Omega\cap Q_i}\int_{{\mathbb S}^1}\varphi(x,y)\,d\nu_i(y)\,dx.
\end{equation*}
Plugging the last two limits into \eqref{varifoldsconverge6}, since $s^2=|Q_i|$
we obtain
\begin{equation}\label{varifold2}
\lim_{L \to\infty} \frac 1{\haus(\Sigma_L)}
\int_{\Sigma_L} \varphi(x,\xi_L(x))\,d\haus(x) =
\sum_{Q_i\in \chb}
\alpha_i
\int_{\Omega\cap Q_i}\int_{{\mathbb S}^1}\varphi(x,y)\,d\nu_i(y)\,dx,
\end{equation}
that is \eqref{varifold}, due to the structure of $\theta$ as established in \eqref{mufit}.

Finally, to prove \eqref{gammasup}, observe that every connected component
of $\Omega\setminus \Sigma_L$ is contained inside some square $Q_j\in\chb$ and
moreover, by construction, $\Sigma_L\supset \partial(\Omega\cap Q_i)$ for \emph{every}
square $Q_i\in\chb$. As a consequence, we have
\[
\begin{split}
&\lambda_1^A(\Omega\setminus\Sigma_L)=
\min_{Q_i\in\chb}
\lambda_1^A((\Omega\cap Q_i)\setminus\Sigma_L)
=
\min_{Q_i\in\chb}
\lambda_1^A((\Omega\cap Q_i)\setminus\Sigma^i_{s^2\alpha_i L})\\
\geq
& \eta^{-1} \min_{Q_i\in\chb}
\lambda_1^{M_i}((\Omega\cap Q_i)\setminus\Sigma^i_{s^2\alpha_i L})
\geq
\eta^{-1} \min_{Q_i\in\chb}
\lambda_1^{M_i}(Q_i\setminus\Sigma^i_{s^2\alpha_i L})
\end{split}
\]
(the last two inequalities follow from \eqref{assu2} and monotonicity
of $\lambda_1$ with respect to set inclusion, respectively: observe that, since
$M_i$ is a constant matrix, the eigenvalue
$\lambda_1^{M_i}(Q_i\setminus\Sigma^i_{s^2\alpha_i L})$
makes sense for every $Q_i$, while $\lambda_1^{A}(Q_i\setminus\Sigma^i_{s^2\alpha_i L})$ would make
no sense if $Q_i\cap\partial\Omega\not=\emptyset$, since $A(x)$ is defined
only for $x\in\overline{\Omega}$). Passing to reciprocals, swapping $\limsup$ and $\max$,
and using (ii)
with $\ell=s^2\alpha_i L$, we obtain
\[
\begin{split}
&\limsup_{L\to\infty}
\frac {L^2}
{\lambda_1^A(\Omega\setminus\Sigma_L)}
\leq
\eta
\limsup_{L\to\infty}
\left(
\max_{Q_i\in\chb}
\frac {L^2}
{\lambda_1^{M_i}(Q_i\setminus\Sigma^i_{s^2\alpha_i L})}\right)\\
&\leq
\eta
\max_{Q_i\in\chb}
\left(\limsup_{L\to\infty}
\frac {L^2}
{\lambda_1^{M_i}(Q_i\setminus\Sigma^i_{s^2\alpha_i L})}\right)\\
&\leq
\frac{\eta}{s^4\alpha_i^2}
\max_{Q_i\in\chb}
\frac{s^4}{\big(\pi\int_{\mathbb{S}^1}
\sqrt{ \langle M_iy,y\rangle}\, d\nu_i(y) \big)^2}
\leq\eta F_\infty(\theta),
\end{split}
\]
having used \eqref{Finffitted} in the last passage. This proves \eqref{gammasup} up to the multiplicative
constant $\eta>1$ which, being arbitrary, can easily be remove by a diagonal argument, as in the final part
of the proof of Proposition~\ref{proptile}.
\end{proof}
The passage to general $\nu$ is standard: by the classical $\Gamma$-convergence theory the $\Gamma$-limsup inequality for every varifolds in $\prob$ holds if one proves the density in energy of those varifold considered in \eqref{mufit} in the space of all varifolds $\prob$.

\begin{lemma}[renormalization]%\label{lemmatec}
For some $a>0$, let $p:[a,\infty)\to\R^+$ be a function such that $p(L)\sim L$ as
$L\to\infty$. Then, for some $b>0$, there exists another function $q:[b,\infty)\to [a,\infty)$ such that
\begin{equation*}
%\label{tesilemma}
p(q(L))\leq L\quad\forall L\geq b,\quad\text{and}\quad
p(q(L))\sim L\quad\text{as $L\to\infty$.}
\end{equation*}
\end{lemma}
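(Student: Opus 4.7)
The plan is to construct $q$ piecewise by discretizing the hypothesis $p(L)/L \to 1$. Writing $p(L) = L(1+h(L))$ with $h(L) \to 0$, for each integer $n \geq 1$ the convergence provides a threshold $L_n \geq a$ such that $|h(t)| \leq 1/n$ for all $t \geq L_n$; by replacing $L_n$ with $\max(L_n, L_{n-1}+1)$ if necessary, I may assume the sequence $\{L_n\}$ is strictly increasing with $L_n \to \infty$.

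Next I set $M_n := (1+1/n)\,L_n$ and, after possibly further enlarging some $L_n$, I arrange that the $M_n$ themselves form a strictly increasing sequence tending to $+\infty$. Then for $L \geq M_1 =: b$ I define
$$
q(L) := \frac{L}{1+1/n} \qquad \text{when } L \in [M_n, M_{n+1}).
$$
This gives $q(L) \geq M_n/(1+1/n) = L_n \geq a$, so $q$ indeed maps $[b,\infty)$ into $[a,\infty)$.

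To verify the two required properties, I fix $L \in [M_n, M_{n+1})$, note that $q(L) \geq L_n$ hence $|h(q(L))| \leq 1/n$, and compute
$$
p(q(L)) = q(L)\bigl(1 + h(q(L))\bigr) \leq q(L)\,(1 + 1/n) = L,
$$
which gives the pointwise inequality. The matching lower bound $p(q(L)) \geq q(L)(1-1/n) = L(1-1/n)/(1+1/n)$ sandwiches the ratio $p(q(L))/L$ between $(1-1/n)/(1+1/n)$ and $1$; since $n \to \infty$ whenever $L \to \infty$, the ratio tends to $1$, yielding $p(q(L)) \sim L$. I do not anticipate any genuine obstacle here; the only bookkeeping is the arrangement of the $M_n$ as a strictly increasing exhausting sequence, which costs nothing since the thresholds $L_n$ may always be enlarged.
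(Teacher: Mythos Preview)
Your proof is correct, but it takes a different route from the paper's. The paper defines $E(L):=\{x\geq a : p(x)\leq L\}$ and $s(L):=\sup E(L)$, checks that $s(L)\sim L$, and then simply picks any $q(L)\in E(L)$ with $s(L)-1<q(L)<s(L)$; the inequality $p(q(L))\leq L$ is immediate from $q(L)\in E(L)$, and $p(q(L))\sim L$ follows because $q(L)\sim s(L)\sim L$ together with $p(t)\sim t$ (applied along $t=q(L)\to\infty$) gives $p(q(L))\sim q(L)\sim L$. Your approach instead discretizes the hypothesis $h(L)\to 0$ into a sequence of thresholds $L_n$ and defines $q$ piecewise as $L/(1+1/n)$ on successive intervals. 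The paper's argument is shorter and avoids the bookkeeping of arranging the $M_n$ to be increasing, while yours is more explicit and yields a concrete (piecewise linear) formula for $q$. Both are entirely valid; the underlying idea---shrink $L$ by a factor just large enough to absorb the error in $p(L)/L$---is the same.
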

\begin{proof}
For large $L$ define the set $E(L):=\{x\geq a\,|\,\, p(x)\leq L\}$, and let $s(L):=\sup E(L)$.
%(observe that $E(L)\not=\emptyset$ if is large enough).
Since $p(L)\sim L$ as $L\to\infty$, one can easily check that also $s(L)\sim L$ as $L\to\infty$. Then, any
function $q(L)$ satisfying
\[
q(L)\in E(L)\quad
\text{and}\quad
s(L)-1 < q(L)<s(L)\qquad\text{(for every $L$ large enough)}
\]
will satisfy the claim of the Lemma, since clearly $q(L)\sim s(L)\sim L$ as $L\to\infty$, while
$q(L)\in E(L)$   guarantees  that  $p(q(L))\leq L$.
\end{proof}

\begin{remark}%\label{remrenorm}
\rm
Using the previous lemma, one can strengthen the claim of Proposition~\ref{proptile2} with
the \emph{additional} requirement that
\begin{equation}
\label{extrareq}\haus(\Sigma_L)\leq L,\quad\text{that is,}\quad \Sigma_L\in \Alo.
\end{equation}
Indeed, it suffices to apply the lemma to the function $p(L):=\haus(\Sigma_L)$, where
the sets $\Sigma_L$ are those initially yielded by Proposition~\ref{proptile2} (note that $p(L)\sim L$
by \eqref{asL}). Then, one can define the new sets $\Sigma'_L:=\Sigma_{q(L)}$, thus gaining
that $\haus(\Sigma'_L)=p(q(L))\leq L$ while keeping $\haus(\Sigma'_L)\sim L$. Then, replacing
each $\Sigma_L$ with the corresponding $\Sigma'_L$ proves the claim.
\end{remark}

\begin{proposition}[$\Gamma$-limsup inequality]\label{prop14}
For every  varifold $\theta\in\prob$, there exists
a sequence of varifolds $\{\theta_L\}\subset\prob$ such that $\theta_L\weak\theta$ and,
moreover,
\begin{equation}\label{gammasup2}
  \limsup_{L\to \infty} F_L(\theta_L)\leq F_\infty(\theta).
\end{equation}
\end{proposition}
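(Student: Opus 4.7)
The plan is to reduce to Proposition~\ref{proptile2} (combined with the Remark enforcing $\Sigma_L\in\Alo$), which already provides the $\Gamma$-limsup inequality for varifolds fitted to some grid $\chb$. By the standard density-in-energy criterion of $\Gamma$-convergence theory~\cite{dalmaso}, this inequality then extends to every $\theta\in\prob$ provided that fitted varifolds are \emph{dense in energy}: namely, for each $\theta\in\prob$ one must exhibit a sequence $\{\vartheta^n\}$ of fitted varifolds with $\vartheta^n\weak\theta$ and $\limsup_n F_\infty(\vartheta^n)\leq F_\infty(\theta)$. A diagonal extraction in $(n,L)$ then selects the desired recovery sequence $\{\theta_L\}$ for $\theta$.

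I would construct the approximants by cell-averaging. Assume $F_\infty(\theta)<\infty$ (the other case being trivial) and disintegrate $\theta=\mu\otimes\nu_x$ as in Definition~\ref{defFi}. For $s=2^{-n}$ (possibly shifting the grid slightly to avoid $\mu$-concentration on the grid lines) and every $Q_i\in\chb$ with $\mu(\overline\Omega\cap Q_i)>0$, set
\[
\alpha_i:=\frac{\mu(\overline\Omega\cap Q_i)}{|\Omega\cap Q_i|},\qquad
\nu_i:=\frac{1}{\mu(\overline\Omega\cap Q_i)}\int_{\overline\Omega\cap Q_i}\nu_x\,d\mu(x),
\]
and let $\vartheta^n$ be the fitted varifold built as in \eqref{mufit}; the unit-mass condition \eqref{constants} is immediate since $\sum_i\mu(\overline\Omega\cap Q_i)=1$. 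Testing against tensor products $\varphi(x,y)=u(x)v(y)$, which span a dense subspace of $\Csym$, and invoking the Lebesgue--Besicovitch differentiation theorem for the $L^1(d\mu)$ function $x\mapsto\int_\sfera v(y)\,d\nu_x(y)$, one obtains weak-$*$ convergence $\vartheta^n\weak\theta$ as $n\to\infty$.

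The main difficulty is the energy bound $\limsup_n F_\infty(\vartheta^n)\leq F_\infty(\theta)$. By \eqref{Finffitted}, the relevant quantity on each cell is
\[
\alpha_i\int_\sfera\sqrt{\langle A(x_0)y,y\rangle}\,d\nu_i(y)
=\frac{1}{|\Omega\cap Q_i|}\int_{\overline\Omega\cap Q_i}\int_\sfera\sqrt{\langle A(x_0)y,y\rangle}\,d\nu_x(y)\,d\mu(x),
\]
valid for every $x_0\in\Omega\cap Q_i$. Uniform continuity of $A$ permits swapping $A(x_0)$ with $A(x)$ at the cost of a factor $1+o(1)$ as $s\to 0$, and then Lebesgue--Besicovitch differentiation shows that at a.e.\ $x_0\in\Omega$ the right-hand side converges to $f_\mu(x_0)\int_\sfera\sqrt{\langle A(x_0)y,y\rangle}\,d\nu_{x_0}(y)$, i.e.\ precisely to the reciprocal of $\pi\sqrt{\cdot}$ of the integrand of $F_\infty(\theta)$ at $x_0$. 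The obstacle is that $F_\infty$ is an essential supremum, so this pointwise control does not automatically translate to a uniform control of $F_\infty(\vartheta^n)$: the worst-square contribution could in principle blow up through cells of small $\mu$-mass.

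To remedy this I would first regularize by setting $\theta_\delta:=(1-\delta)\theta+\delta\bar\theta$ for $\delta>0$, where $\bar\theta:=|\Omega|^{-1}\chi_\Omega\,dx\otimes\bar\nu$ for some fixed $\bar\nu\in\Prob(\sfera)$ (e.g.\ any symmetric measure); this forces $f_{\mu_\delta}\geq\delta/|\Omega|$ uniformly on $\Omega$, and a direct estimate from \eqref{gammalimit} yields $F_\infty(\theta_\delta)\leq(1-\delta)^{-2}F_\infty(\theta)$. For the regularized varifold $\theta_\delta$ the cell-averaging argument becomes uniformly controlled across squares (no cell can be almost empty), so that $\limsup_n F_\infty(\vartheta_\delta^n)\leq(1-\delta)^{-2}F_\infty(\theta)$. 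A final diagonal extraction in $(\delta,n,L)$, letting $\delta\to 0$ along a suitable sequence, removes the regularization and concludes the proof.
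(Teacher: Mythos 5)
Your overall architecture---reduction to Proposition~\ref{proptile2} (strengthened by the Remark so that $\Sigma_L\in\Alo$), plus density in energy of fitted varifolds obtained by cell-averaging $\theta$ over the grid $\chb$---is exactly the paper's strategy, and your $\alpha_i,\nu_i$ coincide with the paper's choice (the paper partitions $\overline\Omega$ with intermediate Borel sets $\Omega_i$ rather than shifting the grid, which is immaterial), as does the treatment of cells with vanishing mass and the weak-$*$ convergence. The genuine gap is in the energy estimate. What you must prove is that, for every cell $Q_i$ and every $x\in\Omega\cap Q_i$,
\[
\pi\,\alpha_i\int_\sfera\sqrt{\langle A(x)y,y\rangle}\,d\nu_i(y)\;\geq\;\bigl(1-o(1)\bigr)\,\frac{1}{\sqrt{F_\infty(\theta)}}
\]
\emph{uniformly over the cells}, so that \eqref{Finffitted} closes the argument. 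Your regularization $\theta_\delta=(1-\delta)\theta+\delta\bar\theta$ does force $f_{\mu_\delta}\geq\delta/|\Omega|$, hence a lower bound on all cell masses of order $\delta$; but that only yields a bound on $F_\infty(\vartheta^n_\delta)$ of order $\delta^{-2}$ (through the ellipticity constant), which blows up as $\delta\to0$ and is unrelated to $(1-\delta)^{-2}F_\infty(\theta)$. Pointwise Lebesgue--Besicovitch convergence of the cell averages cannot be upgraded to the required uniform-in-$i$ control merely because no cell is almost empty: the sentence ``for the regularized varifold the cell-averaging argument becomes uniformly controlled across squares, so that $\limsup_n F_\infty(\vartheta_\delta^n)\leq(1-\delta)^{-2}F_\infty(\theta)$'' is precisely the crux of the proposition and is asserted, not proved.

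The missing idea---which is the paper's key (and one-line) step---is this: write the cell quantity as an average against $\mu$, drop the singular part of $\mu$ (i.e.\ use $\mu\geq f_\mu\,dx$, which only decreases it), replace $A(x)$ by $A(z)$ up to a factor $1-\eps$ by uniform continuity, and then bound the resulting average of $g(z):=f_\mu(z)\int_\sfera\sqrt{\langle A(z)y,y\rangle}\,d\nu_z(y)$ over $\Omega\cap Q_i$ from below by $\essinf_{\Omega}g=\bigl(\pi\sqrt{F_\infty(\theta)}\bigr)^{-1}$, simply because an average dominates the essential infimum. This gives $F_\infty(\theta_L)\leq(1-\eps)^{-2}F_\infty(\theta)$ for all $L$ large, and a diagonal argument removes $\eps$. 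Note that this observation also shows your feared scenario of ``cells of small $\mu$-mass'' cannot occur when $F_\infty(\theta)<\infty$ (finiteness of the energy forces $g$, hence $f_\mu$, to be bounded below a.e.\ by a positive constant), so both the regularization in $\delta$ and the differentiation theorem are superfluous; as written, however, your proof does not close without this essential-infimum step.
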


\begin{proof}
Given $\theta\in\prob$, if $\theta$ is fitted to $\chb$ for some $s>0$, then the claim follows from
Proposition~\ref{proptile2}, strengthened according to \eqref{extrareq}. Indeed, in this
case, is suffices to define $\theta_L$ as the varifold associated with the set $\Sigma_L$,
so that the left hand side of \eqref{gammasup2} coincides with that of \eqref{gammasup}
(the fact that $\theta_L\weak \theta$ follows from \eqref{varifold2}).

Then, by well known properties of $\Gamma$-convergence, it suffices to prove that
the class of varifolds fitted to $\chb$ (for some $s>0$) is \emph{dense in energy}.
More precisely, it suffices to prove that for every varifold $\theta\in\prob$
there exist varifolds $\theta_L\in\prob$, each fitted to $\chb$ for some $s>0$ that may
depend on $L$, such that
\begin{equation}
\label{weak1}
\theta_L\weak \theta \quad\text{in $\prob$, as $L\to\infty$}
\end{equation}
and, at the same time,
\begin{equation}
\label{densen}
\limsup_{L\to\infty} F_\infty(\theta_L)\leq F_\infty(\theta).
\end{equation}
Now fix an
arbitrary $L>0$, set e.g. $s=1/L$, and consider the
 family of squares $\chb$ as in Definition~\ref{defstep}.
The main idea is to define $\theta_L$, fitted to $\chb$,
by averaging $\theta$ over a partition of $\overline{\Omega}$
essentially based on $\chb$. Observe that the open squares in $\chb$ cover $\overline{\Omega}$
only up to a set $C$ of measure zero, while the measure $\theta$ may
well charge the set $C\times{\mathbb S}^1$.
%
% On the other hand, we certainly have
% \[
% \overline{\Omega}=\bigcup_{Q_i\in\chb} \overline{\Omega\cap Q_i},
%\]
%but the closed sets $\overline{\Omega\cap Q_i}$ are \emph{not} pairwise
%disjoint (so that a sampling of $\theta$ based on the sets
%$\overline{\Omega\cap Q_i}$ might produce a $\theta_L$ with total mass larger than $1$).
%
We overcome this difficulty by choosing \emph{intermediate} Borel sets $\Omega_i$ such that
\[
\Omega\cap Q_i \,\subseteq\,\Omega_i \,\subseteq\,
\overline{\Omega\cap Q_i},\qquad
\overline{\Omega}=\bigcup_i \Omega_i,\qquad
\Omega_i\cap\Omega_j=\emptyset\quad\forall i\not=j
\]
(the actual choice of the $\Omega_i$s is irrelevant) and define
\begin{equation*}
%\label{defithetaL}
\theta_L:=\sum_{Q_i\in \chb} \alpha_i \chi_{\Omega\cap Q_i} \otimes \nu_i,
\end{equation*}
where the constants $\alpha_i$ and the probability measures $\nu_i$ are given by
\[
\alpha_i:= \frac{\theta(\Omega_i \times {\mathbb S}^1)}{|\Omega_i|}
\quad
\text{and}
\quad \nu_i(E):=
\frac{\theta(\Omega_i\times E)}
{\theta(\Omega_i \times {\mathbb S}^1)}
\quad
\text{(for every Borel set $E\subseteq {\mathbb S}^1$).}
\]
It is clear that $\theta_L$  is
a varifold fitted to $\chb$ (note that \eqref{constants} is satisfied,
since $\Omega_i$ is equivalent to $\Omega\cap Q_i$ up to a Lebesgue-negligible set).
This definition makes sense only if $\theta(\Omega_i \times {\mathbb S}^1)>0$
for every $i$: on the other hand, if $\theta(\Omega_i \times {\mathbb S}^1)=0$ for some $i$,
then $\alpha_i=0$ and $\nu_i$ becomes irrelevant (for definiteness, one can
define $\nu_i$ as \emph{any} probability measure). Observe that, in this case, one has
$F_\infty(\theta)=+\infty$, and \eqref{densen} becomes trivial, regardless of $\theta_L$.

The measure
$\theta_L$ so defined is a discretization of $\theta$ over $\chb$, and in concrete terms
its action on a Borel function $\varphi(x,y)\geq 0$ is given by
\begin{equation*}
%\label{intthetaL}
\begin{split}
\int_{\overline{\Omega}\times {\mathbb S}^1}
\varphi(x,y)\,d\theta_L(x,y)
%&=
%\sum_{Q_i\in\chb} \alpha_i \int_{\Omega\cap Q_i}
%\left(  \frac 1
%{\theta(\Omega_i \times {\mathbb S}^1)}
%\int_{\Omega_i\times {\mathbb S}^1}\varphi(x,y)\,d\theta(z,y)\right)\,dx\\
=\sum_{Q_i\in\chb} \frac 1{|\Omega_i|} \int_{\Omega\cap Q_i}
\left(
\int_{\Omega_i\times {\mathbb S}^1}\varphi(x,y)\,d\theta(z,y)\right)\,dx\\
\end{split}
\end{equation*}
It is routine  to check that \eqref{weak1} holds: e.g.,
the Wasserstein distance (see  \cite{amgisa})
$W^{1}(\theta,\theta_L)$ does not exceed $\sqrt{2}/L$, the diameter of each $Q_i$.
As a consequence, we only focus on \eqref{densen} (assuming all $\alpha_i>0$, otherwise
\eqref{densen} is trivial as already observed).

Now let $\theta=\mu\otimes \nu_x$ be the disintegration of $\theta$ (as discussed in Definition~\ref{defFi}),
and let $f\in L^1(\Omega)$ be the density of $\mu$ with respect to the Lebesgue measure
(clearly, $\mu\geq f$ as measures).
For every $Q_i\in\chb$, and for every $x\in \Omega\cap Q_i$, we have
\[
\begin{split}
&\pi\alpha_i \int_{\mathbb{S}^1}
\sqrt{ \langle A(x)y,y\rangle}\, d\nu_i(y)
=
\frac \pi {|\Omega_i|}
\int_{\Omega_i\times {\mathbb S}^1}
\sqrt{ \langle A(x)y,y\rangle}  \,d\theta(z,y)
\\
=&\frac \pi {|\Omega_i|}
\int_{\Omega_i} \left(\int_{{\mathbb S}^1}
\sqrt{ \langle A(x)y,y\rangle}
\,d\nu_z(y)\right)
d\mu(z)\\
\geq
&\frac \pi {|\Omega_i|}
\int_{\Omega_i} \left(\int_{{\mathbb S}^1}
\sqrt{ \langle A(x)y,y\rangle}
\,d\nu_z(y)\right)
f(z)\,dz.
\end{split}
\]
Now, for every small $\eps>0$, since the matrices $A(x)$ are uniformly continuous and positive
definite over $\overline{\Omega}$, if $L$ is large enough (and consequently
the side length $s=1/L$ of every $Q_j\in\chb$ is small enough) we have
\[
x,z\in \Omega_i\Rightarrow
\sqrt{ \langle A(x)y,y\rangle}
\geq (1-\eps)\sqrt{ \langle A(z)y,y\rangle}\quad
\forall y\in{\mathbb S}^1.
\]
This allows us to replace, up to a factor $1-\eps$, $A(x)$ with $A(z)$ in the previous estimate:
thus, recalling  \eqref{gammalimit}, we obtain
\[
\begin{split}
&\pi\alpha_i \int_{\mathbb{S}^1}
\sqrt{ \langle A(x)y,y\rangle}\, d\nu_i(y)
\geq
\frac {(1-\eps)\pi} {|\Omega_i|}
\int_{\Omega_i}  f(z)\left(\int_{{\mathbb S}^1}
\sqrt{ \langle A(z)y,y\rangle}
\,d\nu_z(y)\right)
\,dz\\
\geq &(1-\eps)\pi\,\essinf_{z\in\Omega}
\left(f(z)\int_{{\mathbb S}^1}
\sqrt{ \langle A(z)y,y\rangle}
\,d\nu_z(y)\right)=
\frac{1-\eps}{\sqrt{F_\infty(\theta)}},
\end{split}
\]
valid for $L$ large enough (depending only on $\eps$).
Squaring and passing to reciprocals, the arbitrariness of $x$ gives
\[
\begin{split}
\esssup_{x\in\Omega\cap Q_i}\frac {1}
{\pi^2\alpha_i^2 \left(\int_{\mathbb{S}^1}
\sqrt{ \langle A(x)y,y\rangle}\, d\nu_i(y)\right)^2}
\leq
\frac{F_\infty(\theta)}{(1-\eps)^2},\quad\forall Q_i\in\chb.
\end{split}
\]
Taking the maximum over $Q_i$ and using \eqref{Finffitted}, one obtains \eqref{gammasup2}
up to a multiplicative factor $(1-\eps)^{-2}$, which can then removed by the
usual diagonal argument.
\end{proof}

\section{Proofs of the main results}

We are in a position to prove all the results stated in the introduction.

\begin{proof}[Proof of Theorem \ref{aniso}] The claim follows immediately
as a consequence of Proposition~\ref{prop8} combined with Proposition~\ref{prop14}.
\end{proof}

\begin{proof}[Proof of Theorem \ref{teomin}] Consider a generic varifold $\theta\in \prob$,
disintegrated  as $\theta=\mu\otimes \nu_x$ as in
Definition~\ref{defFi}. A look at \eqref{gammalimit} reveals that
the following conditions are in any case \emph{necessary}, for
$\theta$ to be a (candidate) minimizer of $F_\infty$:
{\setlength\itemindent{0pt}}\begin{itemize}
\item[(a)]
 the density $f_\mu$ must satisfy $f_\mu(x)>0$ for a.e. $x\in\Omega$, otherwise the
 denominator in \eqref{gammalimit} vanishes on a set of positive measure, and
$F_\infty(\theta)=+\infty$;
\item[(b)]
the first marginal $\mu\in\Prob(\cOmega)$ must be \emph{absolutely continuous} w.r.to the
Lebesgue measure, i.e. $\mu=f_\mu(x)\,dx$,
in such a way that $\int_\Omega f_\mu=1$: otherwise, the varifold $\widehat{\theta}
:=\widehat{f}(x)\otimes \nu_x$, where $\widehat{f}:=f_\mu/\Vert f_\mu\Vert_{L^1}$ is the
renormalization of $f_\mu$, would be such that $F_\infty(\widehat{\theta})<F_\infty(\theta)$;
\item[(c)] for a.e. $x\in\Omega$, the measure $\nu_x\in\Prob(\sfera)$ must be supported on a set
of normalized eigenvectors of $A(x)$ relative to its \emph{largest eigenvalue} $\sigma_{\max}(x)$.
Indeed, for any probability measure $\nu\in\Prob(\sfera)$
one has the double inequality
\begin{equation}
\label{integrals2}
\sqrt{\sigma_{\min}(x)}\leq
\int_\sfera \sqrt{\langle A(x) y,y\rangle} \,d\nu(y)
\leq \sqrt{\sigma_{\max}(x)},
\end{equation}
where $\sigma_{\min}(x)$ is the smallest eigenvalue of $A(x)$. If $\sigma_{\min}(x)=
\sigma_{\max}(x)$, i.e. if $A(x)$ is a multiple of the identity matrix, then both inequalities
are in fact equalities, and in this case the condition that $\nu_x$ be supported
on a set of eigenvectors relative to $\sigma_{\max}(x)$ is \emph{trivial}, since any $y\in\sfera$
is such an eigenvector. On the other hand, if
$\sigma_{\min}(x)<
\sigma_{\max}(x)$ (i.e. if $A(x)$ is \emph{anisotropic}) then the second inequality in
\eqref{integrals2} becomes an equality only when the measure $\nu$ is supported in $\{\xi,-\xi\}$,
where $\xi\in\sfera$ is such that $A(x)\xi=\sigma_{\max}(x) \xi$ (this eigenvector is
now unique up to the orientation, and this completely determines $\nu$ as a probability
measure on the projective space).
In order to minimize the essential supremum
in \eqref{gammalimit}, it is necessary that the second inequality in \eqref{integrals2}
becomes an equality for a.e. $x\in\Omega$: this is the claimed condition
on the support of $\nu_x$.
\end{itemize}
Thus, in the light of these necessary
conditions, any candidate minimizer has the form $\theta=f_\mu(x)\,dx \otimes \nu_x$,
with $f_\mu>0$ such that $\int_\Omega f_\mu=1$ and the $\nu_x$ as in (c).
For any such varifold $\theta$, from \eqref{gammalimit} we find
\[
F_\infty(\theta)=
\frac{1}{\pi^2}\esssup_{x\in\Omega}\frac{1}
{\big(f_\mu(x) \sqrt{\sigma_{\max}(x)}  \big)^2}
=
\frac{1}{\pi^2}\esssup_{x\in\Omega}\frac{1}
{f_\mu(x)^2 \sigma_{\max}(x)},
\]
and by Cauchy-Schwarz it is immediate to check that the only optimal choice for $f_\mu(x)$,
under  the constraints $f_\mu>0$ and $\int_\Omega f_\mu=1$, is that in \eqref{defbestf}.

Combining with condition (b) above, we see that the first marginal $\mu$ of any
minimizer is necessarily given by $f_\infty(x)\,dx$. If, moreover, $A(x)$
is purely anisotropic, condition (c) forces the measures $\nu_x$ to be uniquely determined
(as measures on the projective space): as a consequence, the minimizer $\theta$ is uniquely determined
as an element of $\prob$.
\end{proof}

\begin{proof}[Proof of Theorem~\ref{teodist}] By Corollary \ref{cordist}, up to a subsequence
(not relabelled) we have $\theta_{\Sigma_L}\weak \theta_\infty$ in $\prob$, where $\theta_\infty$ is
an absolute minimizer of $F_\infty$. Recalling \eqref{weakconvergenceT}, this means that
\begin{equation}
\label{weakconvergenceT2}
\lim_{L\to\infty}
\int_{\Ots} \varphi(x,y)
 \,d\theta_{\Sigma_L}
=
\int_{\Ots} \varphi(x,y)
\,d\theta_\infty\quad
\forall
\varphi\in\Csym.
\end{equation}
Moreover, by the last part of Theorem~\ref{teomin},
the first marginal $\mu$ of $\theta_\infty$ is \emph{uniquely} determined as a probability measure over
$\cOmega$: it is the absolutely
continuous measure with density $f_\infty(x)$ defined in \eqref{defbestf}. Now fix a square $Q\subset\Omega$.
Since $\mu$ is absolutely continuous w.r.to the Lebesgue measure in $\Omega$ and $|\partial Q|=0$,
we have $\theta_\infty(\partial Q\times \sfera)=0$, that is, the measure $\theta$ does not
charge the set $(\partial Q)\times\sfera$. This set is the boundary (relative to $\cOmega\times \sfera$)
of the cylinder $Q\times\sfera$: therefore, from standard results in measure theory, the weak convergence
in \eqref{weakconvergenceT2} can be localized to the cylinder $Q\times\sfera$, that is,
we have
\begin{equation*}
%\label{weakconvergenceT2}
\lim_{L\to\infty}
\int_{Q\times\sfera} \varphi(x,y)
 \,d\theta_{\Sigma_L}
=
\int_{Q\times\sfera} \varphi(x,y)
\,d\theta_\infty\quad
\forall
\varphi\in C_{\text{sym}}(\cOmega\times \sfera).
\end{equation*}
More explicitly, according to \eqref{defT} written with $\Sigma_L$ in place of $\Sigma$,
this means that
\begin{equation*}
%\label{normopt2}
\lim_{L\to\infty}\frac 1  {\haus(\Sigma_L)}
\int_{Q\cap\Sigma_L} \varphi(x,\xi_{\Sigma_L}(x)) \,d\haus(x)
=
\int_{Q}
\left(
\int_\sfera \varphi(x,y)\,d\nu_x(y)\right)
f_\infty(x)\,dx,
\end{equation*}
having used the slicing $\theta_\infty=f_\infty(x)dx\otimes \nu_x$, provided by Theorem~\ref{teomin},
in the right hand side. In particular, \eqref{asopt} follows if we choose $\varphi\equiv 1$.

Similarly, choosing $\varphi(x,y)=\psi(y)$ where
$\psi\in C(\sfera)$ satisfies $\psi(-y)=\psi(y)$, we have
\begin{equation*}
\lim_{L\to\infty}\frac 1  {\haus(\Sigma_L)}
\int_{Q\cap\Sigma_L} \psi(\xi_{\Sigma_L}(x)) \,d\haus(x)
=
\int_{Q}
\left(
\int_\sfera \psi(y)\,d\nu_x(y)\right)
f_\infty(x)\,dx.
\end{equation*}
If $Q$ is contained in the anisotropy region where $\sigma_{\min}(x)<\sigma_{\max}(x)$,
then by Theorem~\ref{teomin} for a.e. $x\in Q$ the probability measure $\nu_x$ is supported on
the set $\{ \xi(x)$, $-\xi(x)\}$, where $\xi(x)$ is the eigenvector of $A(x)$ relative
to its largest eigenvalue $\sigma_{\max}(x)$. Therefore, since $\psi(y)=\psi(-y)$, the
last equation simplifies to
\begin{equation*}
\lim_{L\to\infty}\frac 1  {\haus(\Sigma_L)}
\int_{Q\cap\Sigma_L} \psi(\xi_{\Sigma_L}(x)) \,d\haus(x)
=
\int_{Q}
\psi\bigl(\xi(x)\bigr)
f_\infty(x)\,dx.
\end{equation*}
If we multiply and divide the left hand side by $\haus(Q\cap\Sigma_L)$, then
\eqref{normopt} follows, using \eqref{asopt}.

Finally, using the explicit structure of the minimizers $\theta_\infty$
provided by Theorem~\ref{teomin},
from \eqref{defbestf} we find
\[
\min_{\theta\in\prob} F_\infty(\theta)=F_\infty(\theta_\infty)=
\frac{\left(\int_{\Om}1/\sqrt{\smax(x)}dx\right)^2}{\pi^2}.
\]
Then
\eqref{asymptotically} follows immediately, from basic $\Gamma$-convergence theory.
\end{proof}

\bigskip
\textbf{Acknowledgements.}
The second author wishes to express his gratitude to Gianni Dal Maso for some discussions on the problem studied in this paper.

\end{document}